\newtheorem{theorem}{Theorem}
\newtheorem{lemma}[theorem]{Lemma}
\title{Uniqueness of RS Saddle Point for Ising Perceptron}
\author{Shuta Nakajima}
\affil{Department of Mathematics, Keio University}
\date{}
\begin{document}
\maketitle

\begin{abstract}
We study the replica-symmetric saddle point equations for the Ising perceptron with Gaussian disorder and margin $\kappa\ge 0$. We prove that for each $\kappa\ge 0$ there is a critical capacity
$\alpha_c(\kappa)=\frac{2}{\pi\,\mathbb E[(\kappa-Z)_+^2]}$,
where $Z$ is a standard normal and $(x)_+=\max\{x,0\}$, such that the saddle point equation has a unique solution for $\alpha\in(0,\alpha_c(\kappa))$ and has no solution when $\alpha\ge \alpha_c(\kappa)$.
When $\alpha\uparrow \alpha_c(\kappa)$ and $\kappa>0$, the replica-symmetric free energy at this solution  diverges to $-\infty$.
In the zero-margin case $\kappa=0$, Ding and Sun obtained a conditional uniqueness result, with one step verified numerically. Our argument gives a fully analytic proof without computer assistance. We used GPT-5 to help develop intermediate proof steps and to perform sanity-check computations.
\end{abstract}

\section{Setup and main results}

Let $\phi$ and $\Phi$ be the standard normal density and cumulative distribution function, respectively:
\[
\phi(u)=\frac{1}{\sqrt{2\pi}}e^{-u^2/2},
\qquad
\Phi(u)=\int_{-\infty}^u \phi(s)\,ds,
\qquad
\overline{\Phi}(u)=1-\Phi(u)=\int_u^{\infty}\phi(s)\,ds.
\]
Define the inverse Mills ratio by 
\[
E(u):=\frac{\phi(u)}{\overline{\Phi}(u)}\qquad (u\in\mathbb R),
\]
and for $q\in[0,1)$ set
\[
F_q(x):=\frac{1}{\sqrt{1-q}}\,E\!\left(\frac{\kappa-x}{\sqrt{1-q}}\right),\qquad x\in\mathbb R.
\]
Let $Z\sim\mathcal N(0,1)$. For $r\ge 0$, define
\[
P(r):=\mathbb E\big[\tanh^2(\sqrt{r}\,Z)\big]\in[0,1),
\]
and for $\alpha>0,\,\kappa\geq 0$ and $q\in[0,1)$ define
\[
R_{\kappa}(q,\alpha):=\alpha\,\mathbb E\big[F_q(\sqrt{q}\,Z)^2\big]\in(0,\infty).
\]
We study solutions $(q,r)\in[0,1)\times[0,\infty)$ to
\begin{equation}\label{eq:system}
\left\{
\begin{aligned}
q&=P(r),\\
r&=R_{\kappa}(q,\alpha).
\end{aligned}
\right.
\end{equation}

It is convenient to introduce
\begin{equation}\label{eq:defB}
B(q):=(1-q)\,\mathbb E\left[E\!\left(\frac{\kappa-\sqrt{q}\,Z}{\sqrt{1-q}}\right)^2\right],\qquad q\in[0,1),
\end{equation}
so that the second equation in \eqref{eq:system} can be rewritten as
\begin{equation}\label{eq:r_in_terms_of_B}
r=\frac{\alpha}{(1-q)^2}\,B(q).
\end{equation}
Set
\[
C_{\kappa}:=\mathbb E\big[(\kappa-Z)_+^2\big]=(\kappa^2+1)\Phi(\kappa)+\kappa\phi(\kappa),
\qquad
\alpha_c(\kappa):=\frac{2}{\pi C_{\kappa}}.
\]

\begin{theorem}\label{thm:main}
Fix $\kappa\geq 0$ and $\alpha\in (0,\alpha_c(\kappa))$. There exists a unique solution to the equation \eqref{eq:system}. Moreover, if $\alpha\ge \alpha_c(\kappa)$, then there is no solution.
\end{theorem}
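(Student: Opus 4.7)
The plan is to eliminate $r$ from the system and reduce to a single equation in $q$. Since $P:[0,\infty)\to[0,1)$ is a strictly increasing smooth bijection with $P(0)=0$ and $P(r)\uparrow 1$, its inverse $P^{-1}:[0,1)\to[0,\infty)$ is well defined. Using \eqref{eq:r_in_terms_of_B}, a pair $(q,r)$ solves \eqref{eq:system} if and only if $q\in[0,1)$ satisfies $\alpha=A(q)$, where
\[
A(q):=\frac{P^{-1}(q)\,(1-q)^2}{B(q)},\qquad q\in[0,1).
\]
Theorem~\ref{thm:main} is thus equivalent to the claim that $A$ is a strictly increasing continuous bijection of $[0,1)$ onto $[0,\alpha_c(\kappa))$.

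The boundary behavior is handled by two asymptotic computations. Clearly $A(0)=0$, since $P^{-1}(0)=0$ and $B(0)=E(\kappa)^2>0$. For the limit as $q\uparrow 1$ I would establish two facts. First, $r(1-P(r))^2\to 2/\pi$ as $r\to\infty$, following from the classical estimate $1-P(r)=\mathbb{E}[\operatorname{sech}^2(\sqrt{r}\,Z)]\sim\sqrt{2/(\pi r)}$ (obtained by the rescaling $y=\sqrt{r}\,z$ together with $\int_{-\infty}^\infty\operatorname{sech}^2=2$). Second, $B(q)\to C_\kappa$ as $q\uparrow 1$, from the pointwise limit $(1-q)\,E\!\bigl((\kappa-\sqrt q\,Z)/\sqrt{1-q}\bigr)^{2}\to(\kappa-Z)_+^{2}$ (using $E(u)\sim u$ as $u\to+\infty$ and $E(u)\to 0$ exponentially as $u\to-\infty$) combined with dominated convergence; the latter requires a uniform-in-$q$ dominating function of the form $C(1+Z^2)$. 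Together these give $\lim_{q\uparrow 1}A(q)=(2/\pi)/C_\kappa=\alpha_c(\kappa)$, and smoothness of $A$ on $(0,1)$ is inherited from that of $P$, $P^{-1}$, and $B$.

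The main obstacle is strict monotonicity of $A$ on $(0,1)$. Parametrizing by $r$ via $q=P(r)$, the task reduces to showing that $\tilde A(r):=r(1-P(r))^2/B(P(r))$ is strictly increasing on $(0,\infty)$, equivalently, that
\[
r\,(\log\tilde A)'(r)\;=\;1-\frac{2rP'(r)}{1-P(r)}-\frac{rP'(r)\,B'(P(r))}{B(P(r))}\;>\;0\qquad\text{for every }r>0.
\]
A leading-order check shows the right-hand side tends to $1$ at $r=0$ and to $0$ as $r\to\infty$, so strict positivity is ``just barely'' true and must be extracted from a genuine variance-type quantity rather than from separate monotonicities of the factors. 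My attack plan is Gaussian integration by parts throughout: rewrite $P'(r)$ as a $\operatorname{sech}^2$-moment via Stein's identity, differentiate $B$ in \eqref{eq:defB} under the expectation using the Mills-ratio identity $E'(u)=E(u)(E(u)-u)\ge 0$, and then recombine the three terms into a manifestly nonnegative Gaussian expression, most naturally arising from a Cauchy--Schwarz-type inequality $\mathbb{E}[XY]^2\le\mathbb{E}[X^2]\mathbb{E}[Y^2]$ applied to well-chosen factors built from $\tanh(\sqrt{r}\,Z)$, from $E((\kappa-\sqrt{q}\,Z)/\sqrt{1-q})$, and from their derivatives. Once monotonicity of $A$ is in hand, the theorem follows at once: each $\alpha\in(0,\alpha_c(\kappa))$ yields a unique $q\in(0,1)$ with $A(q)=\alpha$ and hence a unique $r=P^{-1}(q)$, while for $\alpha\ge\alpha_c(\kappa)$ no solution exists since $A<\alpha_c(\kappa)$ throughout $[0,1)$.
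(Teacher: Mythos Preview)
Your reduction and endpoint computations match the paper's: the problem does reduce to showing that $\tilde A(r)=r(1-P(r))^2/B(P(r))$ is a strictly increasing bijection from $[0,\infty)$ onto $[0,\alpha_c(\kappa))$, and your limits $\tilde A(0)=0$, $\tilde A(r)\to\alpha_c(\kappa)$ are correct. The gap is in the monotonicity step, which you only sketch. You explicitly dismiss the route of proving separate monotonicities of numerator and denominator, writing that positivity ``must be extracted from a genuine variance-type quantity rather than from separate monotonicities of the factors.'' But separate monotonicities \emph{do} hold, and this is exactly how the paper proceeds. The numerator part is short: the substitution $y=\sqrt r\,z$ gives $r(1-P(r))^2=\tfrac{1}{2\pi}\bigl(\int_{\mathbb R}\operatorname{sech}^2(y)e^{-y^2/(2r)}dy\bigr)^2$, visibly increasing in $r$. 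The denominator part---that $q\mapsto B(q)$ is strictly decreasing on $[0,1)$---is the paper's main technical content. One computes $B'(q)=\mathbb E[g(U_q)]$ with $g(u)=E'(u)^2-2(1-E'(u))E(u)^2$, then shows $g$ is strictly decreasing on $[0,\infty)$ (via positivity of $2\times 2$ and $3\times 3$ Hankel moment matrices for the truncated Gaussian, reduced to an elementary two-variable polynomial inequality) and $g\le 1/18$ on $(-\infty,0]$ (via a critical-point analysis yielding an explicit rational function of one parameter). Since $g(0)=-4(\pi-3)/\pi^2<-1/18$ and $\mathbb P(U_q<0)\le 1/2$ for $\kappa\ge 0$, the expectation is negative.

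Your alternative plan---combine all three terms of $r(\log\tilde A)'(r)$ and extract positivity from a single Cauchy--Schwarz applied to unspecified factors built from $\tanh(\sqrt r\,Z)$ and $E((\kappa-\sqrt q\,Z)/\sqrt{1-q})$---is not yet a proof: you have not named the pairing or checked that the resulting inequality is the one you need. Given that even the isolated statement $B'<0$ already requires moment inequalities strictly beyond Cauchy--Schwarz (positivity of a $3\times3$ Hankel minor is used), there is no reason to expect the combined expression to yield to a one-line argument. If you want to salvage the plan, the cleanest fix is to abandon the combination and prove the two monotonicities separately, as above.
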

For $\alpha<\alpha_c(\kappa)$, let $(q_\alpha,r_\alpha)$ be the unique solution to the equation \eqref{eq:system} given by Theorem~\ref{thm:main}.
\begin{theorem}\label{thm:2ndmain}
Fix $\kappa\geq 0$. When $\alpha\uparrow\alpha_c(\kappa)$, \[
q_\alpha\to 1
\ \text{ and }\ 
r_\alpha\to\infty.
\]
\end{theorem}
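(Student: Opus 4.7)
The plan is to argue by compactness and continuity, exploiting the fact that Theorem~\ref{thm:main} forbids any solution of~\eqref{eq:system} at $\alpha=\alpha_c(\kappa)$. If $q_\alpha$ stayed bounded away from $1$ along some sequence $\alpha_n\uparrow\alpha_c(\kappa)$, a subsequence would converge to some $q^\ast<1$; together with the corresponding $r_{\alpha_n}$, this would yield a limiting pair solving~\eqref{eq:system} at the forbidden value $\alpha_c(\kappa)$. The contradiction forces $q_\alpha\to 1$, and since $P\colon[0,\infty)\to[0,1)$ is a strictly increasing bijection with $P(r)\to 1$ as $r\to\infty$, the conclusion $r_\alpha\to\infty$ follows automatically.

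Concretely, I would fix a sequence $\alpha_n\uparrow\alpha_c(\kappa)$ and pass to a sub-subsequence (not relabeled) along which $q_{\alpha_n}\to q^\ast\in[0,1]$, which is possible by compactness. Assume for contradiction that $q^\ast<1$. Using the representation $r_{\alpha_n}=\alpha_n B(q_{\alpha_n})/(1-q_{\alpha_n})^2$ from~\eqref{eq:r_in_terms_of_B} together with continuity of $B$ at $q^\ast$, one obtains $r_{\alpha_n}\to r^\ast:=\alpha_c(\kappa)B(q^\ast)/(1-q^\ast)^2\in[0,\infty)$. Continuity of $P$ then gives $q^\ast=\lim_n P(r_{\alpha_n})=P(r^\ast)$, so $(q^\ast,r^\ast)$ is a solution of~\eqref{eq:system} at $\alpha=\alpha_c(\kappa)$, contradicting Theorem~\ref{thm:main}. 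Hence $q^\ast=1$; since the subsequence was arbitrary, $q_\alpha\to 1$ as $\alpha\uparrow\alpha_c(\kappa)$, and $r_\alpha\to\infty$ as explained above.

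The main obstacle I anticipate is the continuity of $B$ on $[0,1)$, which is needed to pass to the limit in~\eqref{eq:r_in_terms_of_B}. For $q$ in a compact sub-interval $[0,q_0]\subset[0,1)$, the argument $(\kappa-\sqrt{q}Z)/\sqrt{1-q}$ is bounded in absolute value by $(\kappa+|Z|)/\sqrt{1-q_0}$, and standard Mills-ratio bounds yield $E(u)\le C(1+|u|)$ uniformly in $u\in\mathbb{R}$, so $E(\cdot)^2$ is dominated by the Gaussian-integrable envelope $C'(1+Z^2)$. Dominated convergence then gives continuity of $B$, and hence of $R_\kappa(\cdot,\cdot)$, on $[0,1)\times(0,\infty)$. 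Beyond this routine check the proof reduces to a short compactness-plus-contradiction argument built on Theorem~\ref{thm:main}; notably there is no need to track the precise rates at which $q_\alpha\uparrow 1$ or $r_\alpha\to\infty$, though the asymptotic matching $\alpha_c(\kappa)C_\kappa=2/\pi$ emerges naturally from $1-P(r)\sim\sqrt{2/(\pi r)}$ and $B(q)\to C_\kappa$ and provides a sanity check on the identification of $\alpha_c(\kappa)$.
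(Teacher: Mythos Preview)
Your proposal is correct and follows essentially the same compactness-plus-contradiction argument as the paper: assume along a subsequence that the solution does not escape to the boundary, use continuity of $B$ (which is the paper's Lemma~\ref{lem:B_cont0-TOBECHECKED}) and of $P$ to pass to the limit in \eqref{eq:system}, and obtain a forbidden solution at $\alpha=\alpha_c(\kappa)$, contradicting Theorem~\ref{thm:main}. The only cosmetic difference is that the paper first shows $r_\alpha\to\infty$ and then deduces $q_\alpha\to 1$ via $q_\alpha=P(r_\alpha)$, whereas you first show $q_\alpha\to 1$ and then deduce $r_\alpha\to\infty$; the logic is otherwise identical.
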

\begin{theorem}\label{thm: bound for threshold}
Fix $\kappa\geq 0$. 
 Define the RS functional
\begin{equation}\label{eq:RSfunctional}
\mathcal F_\kappa(\alpha;q,r)
:=
-\frac{r(1-q)}{2}
+\mathbb E\big[\log(2\cosh(\sqrt r\,Z))\big]
+\alpha\,\mathbb E\!\left[\log \overline\Phi\!\left(\frac{\kappa-\sqrt q\,Z}{\sqrt{1-q}}\right)\right].
\end{equation}
For $\alpha\in(0,\alpha_c(\kappa))$, define the Gardner formula (also known as the replica-symmetric formula) by $$\mathrm{RS}_\star(\alpha,\kappa):=\mathcal F_\kappa(\alpha;q_\alpha,r_\alpha),$$
where $(q_\alpha,r_\alpha)$ is the unique solution given by Theorem~\ref{thm:main}. Then
\[
\lim_{\alpha\uparrow\alpha_c(\kappa)} \mathrm{RS}_\star(\alpha,\kappa)=-\infty.
\]
\end{theorem}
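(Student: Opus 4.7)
The plan is to derive a sharp asymptotic expansion of $\mathcal F_\kappa(\alpha;q_\alpha,r_\alpha)$ as $\alpha\uparrow\alpha_c(\kappa)$. Setting $\epsilon:=1-q_\alpha$, Theorem~\ref{thm:2ndmain} gives $\epsilon\downarrow 0$ and $r_\alpha\to\infty$. First I would pin down the scale of $\epsilon$: from $q=P(r)$ and the Laplace-type asymptotic $\mathbb E[\operatorname{sech}^2(\sqrt r\,Z)] = \sqrt{2/(\pi r)}(1+O(1/r))$ (obtained by the change of variable $u=\sqrt r\,z$ and $\int_0^\infty \operatorname{sech}^2 u\,du=1$), I get $r_\alpha\epsilon^2 = 2/\pi + O(\epsilon^2)$. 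Combined with \eqref{eq:r_in_terms_of_B} and the expansion $B(q)=C_\kappa+O(\epsilon)$, which follows from $E(u)^2=u^2+2+O(1/u^2)$ as $u\to+\infty$ plus exponential decay of $E$ at $-\infty$, this yields $\alpha_c-\alpha=O(\epsilon)$.

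Next I would expand each of the three terms of $\mathcal F_\kappa$ to precision $O(1)$. By \eqref{eq:r_in_terms_of_B} the first term is $-\alpha B(q)/(2\epsilon) = -\alpha C_\kappa/(2\epsilon)+O(1)$. Using $\log(2\cosh x)=|x|+\log(1+e^{-2|x|})$ with $\mathbb E|Z|=\sqrt{2/\pi}$, the second equals $\sqrt{2r_\alpha/\pi}+O(r_\alpha^{-1/2}) = \sqrt{2\alpha C_\kappa/\pi}/\epsilon+O(1)$. For the third term the key input is the Mills tail expansion $\log\overline\Phi(u) = -u^2/2-\log u-\tfrac12\log(2\pi)+O(1/u^2)$ as $u\to+\infty$. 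Splitting the expectation at $u=1$: on $\{u\le 1\}$ the integrand is uniformly bounded, contributing $O(1)$; on $\{u>1\}$, a direct computation gives $\mathbb E[u^2\mathbf 1_{\{u>1\}}]=C_\kappa/\epsilon+O(1)$, and the identity $\log u=\log(\kappa-\sqrt q\,Z)-\tfrac12\log\epsilon$ produces a logarithmic piece with coefficient $\tfrac12\Phi(\kappa)$. Hence the third term equals $-\alpha C_\kappa/(2\epsilon)+(\alpha\Phi(\kappa)/2)\log\epsilon+O(1)$.

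Summing the three pieces yields
\[
\mathcal F_\kappa(\alpha;q_\alpha,r_\alpha) = \frac{-\alpha C_\kappa+\sqrt{2\alpha C_\kappa/\pi}}{\epsilon}+\frac{\alpha\,\Phi(\kappa)}{2}\log\epsilon + O(1).
\]
The decisive algebraic point is that $f(\alpha):=-\alpha C_\kappa+\sqrt{2\alpha C_\kappa/\pi}$ vanishes exactly at $\alpha C_\kappa=2/\pi$, i.e.\ at $\alpha=\alpha_c(\kappa)$, with $f'(\alpha_c)=-C_\kappa/2$; combined with $\alpha_c-\alpha=O(\epsilon)$ this forces $f(\alpha)/\epsilon=O(1)$. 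Meanwhile $\alpha\,\Phi(\kappa)/2\to\alpha_c\Phi(\kappa)/2>0$ (note $\Phi(\kappa)\ge 1/2$ for every $\kappa\ge 0$) while $\log\epsilon\to-\infty$, so the logarithmic term dominates and drives the total to $-\infty$.

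The main obstacle I anticipate is the rigorous term-3 expansion near the crossover $u\approx 1$, where the Mills tail is not yet accurate; splitting at $u=1$ handles this cleanly because $\log\overline\Phi$ is bounded on $\{u\le 1\}$ and the $O(1/u^2)$ error is integrable on $\{u>1\}$. A secondary subtlety is the quantitative bound $B(q)-C_\kappa=O(\epsilon)$ that underpins $\alpha_c-\alpha=O(\epsilon)$: the indicator mismatch between $\{u>1\}$ and $\{Z<\kappa\}$ occupies probability $O(\sqrt\epsilon)$ but on that set $(\kappa-\sqrt q\,Z)^2=O(\epsilon)$, so after the $1/\epsilon$ scaling its effect is $O(\sqrt\epsilon)$ and thus harmless.
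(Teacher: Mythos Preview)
Your approach is correct and genuinely different from the paper's. The paper does not attempt a two-sided asymptotic expansion; instead it works entirely with upper bounds. Its key shortcut is the pointwise inequality $\log\cosh x\le x\tanh x$, which after Gaussian integration by parts gives
\[
\mathbb E[\log\cosh(\sqrt r\,Z)]\le r\,\mathbb E[\operatorname{sech}^2(\sqrt r\,Z)]=r\epsilon,
\]
so the first two terms of $\mathcal F_\kappa$ together are simply $\le r\epsilon/2=\alpha B(q)/(2\epsilon)$. The constraint term is bounded above by $-\alpha A_n/(2\epsilon)+\tfrac{\alpha\Phi(\kappa-\delta)}{2}\log\epsilon+O(1)$ with $A_n:=\mathbb E[(\kappa-\sqrt q Z)_+^2]$, and the $1/\epsilon$ pieces combine as $\alpha(B(q)-A_n)/(2\epsilon)$, which is $O(1)$ by the uniform bound $0\le E(u)^2-(u_+)^2\le C_0$. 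In particular the paper never needs the quantitative estimates $r\epsilon^2=2/\pi+O(\epsilon^2)$, $B(q)-C_\kappa=O(\epsilon)$, or $\alpha_c-\alpha=O(\epsilon)$ that drive your cancellation $f(\alpha)/\epsilon=O(1)$.

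Your route is longer but yields more: it produces the sharp asymptotic $\mathrm{RS}_\star(\alpha,\kappa)=\tfrac{\alpha\Phi(\kappa)}{2}\log(1-q_\alpha)+O(1)$ with the exact constant, whereas the paper only gets the upper bound with $\Phi(\kappa-\delta)$. The price is the extra layer of rate control, and you should be aware that the sign of $f(\alpha)$ matters (for $0<\alpha<\alpha_c$ one has $f(\alpha)>0$, so you really do need the upper bound $\alpha_c-\alpha\le C\epsilon$, not just a two-sided $O(\epsilon)$; this follows from your two rate estimates since $\alpha B(q)=A(r)<2/\pi$).
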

\section{Motivation and related research}

\subsection{The Ising perceptron and the storage problem}

The binary (Ising) perceptron is a standard high-dimensional model for memory storage in a single-layer neural network. It can also be seen as a random constraint satisfaction problem with many constraints.
Fix $N\ge 1$ and let $g^1,\dots,g^M\in\mathbb R^N$ be i.i.d.\ random patterns. In this paper, we focus on Gaussian disorder,
\(
g^m \sim \mathcal N(0,I_N)\quad\text{i.i.d.}
\) 
We set the number of constraints $M=\lfloor \alpha N\rfloor$ with $\alpha>0$ fixed as $N\to\infty$.
For a margin parameter $\kappa\in\mathbb R$, define the feasible set
\[
\mathcal S_{N,M}^{(\kappa)}
:=
\Bigl\{
\sigma\in\{-1,1\}^N:
\frac{\langle g^m,\sigma\rangle}{\sqrt N}\ge \kappa
\ \text{for all } m\in[M]
\Bigr\},
\qquad [M]:=\{1,\dots,M\},
\]
where $\langle\cdot,\cdot\rangle$ is the Euclidean inner product in $\mathbb R^N$, and the corresponding number of solutions
\[
Z_{N,M}^{(\kappa)}
:=
\bigl|\mathcal S_{N,M}^{(\kappa)}\bigr|.
\]
When $\kappa=0$, this becomes the classical half-space intersection model. In this case, $Z_{N,M}^{(0)}$ counts vertices of the hypercube in the intersection of $M$ random half-spaces.
We study the quenched free energy 
\[
F_N(\alpha,\kappa)
:=
\frac{1}{N} \log Z_{N,\lfloor \alpha N\rfloor}^{(\kappa)},
\]
and we ask when $Z_{N,\lfloor \alpha N\rfloor}^{(\kappa)}>0$.
It is expected that feasibility exhibits a sharp transition in $\alpha$. The critical capacity was predicted by statistical physics in the late 1980s; see \cite{GardnerDerrida1988,KrauthMezard1989,Mezard1989}.

\subsection{Replica-symmetric free energy and the saddle point system}

A main goal is to determine the limiting free energy $\lim_{N\to\infty}F_N(\alpha,\kappa)$ and relate it to the feasibility threshold.
Methods from statistical physics predict that this limit is given by an explicit variational formula, often called the Gardner or Krauth--M\'ezard prediction for the Ising perceptron \cite{KrauthMezard1989,Mezard1989}.
Talagrand proved the replica-symmetric formula at small densities $\alpha$, for the half-space model and for more general activation functions \cite{Talagrand2000,Talagrand2011}.
Later, Bolthausen, Sun, Xu, and Nakajima gave another proof based on the conditional second-moment method via approximate message passing (AMP), which applies to a wider class of activation functions \cite{BolthausenNakajimaSunXu2022}.

To describe the previous work more precisely, we use the generalized replica-symmetric expression from \cite{BolthausenNakajimaSunXu2022}.
Let $U:\mathbb R\to[0,1]$ be a measurable activation function.
For $M=\lfloor \alpha N\rfloor$ and $G\in\mathbb R^{M\times N}$ with i.i.d.\ standard Gaussian entries (rows $g^m$), define
\[
Z_N
:=
\sum_{\sigma\in\{-1,1\}^N}
\prod_{m=1}^M U\Bigl(\frac{\langle g^m,\sigma\rangle}{\sqrt N}\Bigr).
\]
The half-space model corresponds to $U(x)=\mathbf 1\{x\ge \kappa\}$. In that case $Z_N=Z_{N,M}^{(\kappa)}$.
The replica-symmetric functional uses a pair $(q,r)$. We view $q$ as an overlap parameter and $r$ as a variance parameter.
Let $Z,\xi\sim\mathcal N(0,1)$ be independent.
For $q\in(0,1)$ define
\[
L_q(x):=\log\mathbb E\Bigl[U\bigl(x+\sqrt{1-q}\,\xi\bigr)\Bigr],
\qquad
F_q(x):=\frac{d}{dx}L_q(x).
\]
The replica-symmetric free energy is the functional
\begin{equation}\label{eq:RS-functional}
\mathrm{RS}(\alpha;U;q,r)
:=
-\frac{r(1-q)}{2}
+
\mathbb E\Bigl[\log\bigl(2\cosh(\sqrt r\,Z)\bigr)\Bigr]
+
\alpha\,\mathbb E\Bigl[L_q(\sqrt q\,Z)\Bigr].
\end{equation}

The following theorem summarizes the replica-symmetric formula and the uniqueness of the associated fixed point equation at sufficiently small $\alpha$.

\begin{theorem}[\cite{Talagrand2000}; \cite{BolthausenNakajimaSunXu2022}]
Assume that $U$ satisfies the following conditions (cf. \cite{BolthausenNakajimaSunXu2022}).
\begin{itemize}
\item[\textup{(A1)}] $U:\mathbb R\to[0,1]$ is measurable, and for $\xi\sim\mathcal N(0,1)$,
\[
\mathbb E_{\xi}\bigl[\xi\,U(\xi)\bigr]
=
\int_{\mathbb R} z\,U(z)\,\phi(z)\,dz
\neq 0.
\]
\item[\textup{(A2)}] Let $\xi,\xi'\sim\mathcal N(0,1)$ be i.i.d.\ Assume that
\[
(K_2)_1(U)
:=
\sup\Biggl\{
\frac{\mathbb E_{\xi,\xi'}\bigl[(\xi-\xi')^2\,U(x+c\xi)\,U(x+c\xi')\bigr]}
 {\mathbb E_{\xi,\xi'}\bigl[U(x+c\xi)\,U(x+c\xi')\bigr]}
:\ x\in\mathbb R,\ \frac{2}{5}\le c\le \frac{7}{3}
\Biggr\}<\infty,
\]
with the convention that the ratio is $+\infty$ when the denominator is $0$. 
\end{itemize}

Then there exists $\alpha_0=\alpha_0(U)>0$ such that for every $\alpha\in(0,\alpha_0]$ the fixed point equation \eqref{eq:RS-fixed-point} has a unique solution $(q_\star,r_\star)$, and
\[
\frac{1}{N}\log Z_N \xrightarrow[N\to\infty]{\mathbb P} \mathrm{RS}(\alpha;U;q_\star,r_\star).
\]
In the special cases where $U(x)=\mathbf 1\{x\ge \kappa\}$, or $U$ is sufficiently smooth, the convergence above was proved earlier by Talagrand \cite{Talagrand2000,Talagrand2011}.
\end{theorem}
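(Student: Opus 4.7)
The plan is to prove the two assertions—uniqueness of the RS fixed point at small $\alpha$ and convergence of the normalized log-partition function—by separate arguments. For the uniqueness, I would work with the map $T_\alpha(q,r):=(P(r),\,\alpha\,\mathbb E[F_q(\sqrt q\,Z)^2])$, whose fixed points are the solutions of the RS system. At $\alpha=0$ the only fixed point is $(0,0)$ since $P(0)=0$. Condition (A1) guarantees that the second component of $T_\alpha$ is not identically zero (so the eventual fixed point is genuinely nontrivial), and condition (A2) supplies a uniform bound on $\mathbb E[F_q(\sqrt q\,Z)^2]$ for $q$ in any compact subinterval of $[0,1)$; a separate elementary argument using $P<1$ rules out fixed points with $q$ close to $1$ when $\alpha$ is small. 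Combining these two facts yields the a priori bound $r=O(\alpha)$, and since $P(r)\le r$, also $q=O(\alpha)$, so every fixed point lies in a small ball around the origin. On this ball the Jacobian $DT_\alpha(0,0)$ has spectral radius $O(\sqrt{\alpha})$, so $T_\alpha$ is a contraction and Banach's fixed point theorem gives uniqueness.

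For the convergence of $\tfrac{1}{N}\log Z_N$, I would follow the conditional second moment method via AMP as in \cite{BolthausenNakajimaSunXu2022}. The construction produces an AMP iteration $(\sigma^{(t)},h^{(t)})_{t\ge 0}$ driven by the nonlinearities associated with $U$, whose state evolution identifies $\tfrac{1}{N}\|\sigma^{(t)}\|^2\to q_\star$ and the conditional variance of $h^{(t)}$ with $r_\star$. Let $\mathcal F_t$ denote the $\sigma$-algebra generated by this iteration. One first checks that $\tfrac{1}{N}\log\mathbb E[Z_N\mid\mathcal F_t]\to \mathrm{RS}(\alpha;U;q_\star,r_\star)$ in probability, by a direct Gaussian computation using the marginal law of $(\sigma^{(t)},h^{(t)})$. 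The conditional second moment $\mathbb E[Z_N^2\mid\mathcal F_t]$ is then analyzed by expanding as a double sum over $(\sigma,\tau)\in\{-1,1\}^{2N}$ and controlling the contribution of each overlap slice $\tfrac{1}{N}\langle\sigma,\tau\rangle=\rho$. Conditioning on $\mathcal F_t$ effectively pins both $\sigma$ and $\tau$ near $\sigma^{(t)}$, so that only $\rho$ close to $q_\star$ contributes, giving a ratio $\mathbb E[Z_N^2\mid\mathcal F_t]/\mathbb E[Z_N\mid\mathcal F_t]^2$ that is sub-exponential in $N$. Combining the Paley--Zygmund inequality with the Jensen upper bound $\mathbb E[\tfrac{1}{N}\log Z_N]\le \tfrac{1}{N}\log \mathbb E[Z_N]$ closes the argument.

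The hardest step is controlling the conditional second moment. The unconditioned ratio $\mathbb E[Z_N^2]/\mathbb E[Z_N]^2$ grows exponentially in $N$ because the dominant contribution comes from overlaps $\rho$ far from $q_\star$; it is precisely the role of condition (A2) to bound $(K_2)_1(U)$ and thereby control the second-order deviation of $\log U(x+c\xi)$ under Gaussian tilts, which is the analytic quantity appearing in the overlap integrand after conditioning. A secondary obstacle is proving state evolution for AMP when $U$ is not Lipschitz, as in the half-space case; this requires the truncation and approximation scheme of \cite{BolthausenNakajimaSunXu2022}. Finally, one must verify that the AMP iteration converges (at least along a subsequence of times and in a suitable sense) to the RS fixed point $(q_\star,r_\star)$, and here the uniqueness established in the first part is used to rule out spurious accumulation points of the state-evolution dynamics.
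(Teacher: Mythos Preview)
The paper does not contain its own proof of this theorem: it is stated in Section~2.2 as a background result, attributed entirely to \cite{Talagrand2000,Talagrand2011} and \cite{BolthausenNakajimaSunXu2022}, and no argument is given. So there is nothing in the paper to compare your proposal against.

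That said, as a sketch of the argument in the cited references your outline is broadly on target: the contraction argument for uniqueness at small $\alpha$, the AMP-based conditional second moment method, state evolution, and the role of (A2) in controlling the overlap integrand are all the right ingredients from \cite{BolthausenNakajimaSunXu2022}. A few points would need tightening if you were actually writing the proof. First, the a priori localization $q=O(\alpha)$ does not follow simply from $P(r)\le r$ and $r=O(\alpha)$ without a uniform bound on $\mathbb E[F_q(\sqrt q Z)^2]$ that covers all of $[0,1)$, not just compact subintervals; you should say more precisely how (A2) yields this or how the near-$q=1$ regime is excluded. Second, the Jacobian $DT_\alpha(0,0)$ actually has one entry that is $P'(0)=1$ (not $O(\sqrt\alpha)$), so the contraction is not immediate from the spectral radius at the origin; one composes the two equations or works with the one-dimensional reduction to get a genuine contraction factor of order $\alpha$. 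Third, your description of the second-moment step is schematic; the actual analysis in \cite{BolthausenNakajimaSunXu2022} requires a careful planted-model comparison and a Laplace-type expansion of the overlap functional, and the smallness of $\alpha$ enters there as well, not only in the fixed-point uniqueness.
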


The saddle point equations are the stationarity conditions for \eqref{eq:RS-functional}. They can be written as the fixed point equation
\begin{equation}\label{eq:RS-fixed-point}
q
=
\mathbb E\bigl[\tanh^2(\sqrt r\,Z)\bigr],
\qquad
r
=
\alpha\,\mathbb E\bigl[F_q(\sqrt q\,Z)^2\bigr].
\end{equation}
In the half-space case $U(x)=\mathbf 1\{x\ge \kappa\}$, we have the explicit formulas
\[
L_q(x)=\log \Phi\Bigl(\frac{x-\kappa}{\sqrt{1-q}}\Bigr),
\qquad
F_q(x)
=
\frac{1}{\sqrt{1-q}}\,
\frac{\phi\bigl(\frac{x-\kappa}{\sqrt{1-q}}\bigr)}{\Phi\bigl(\frac{x-\kappa}{\sqrt{1-q}}\bigr)},
\]
where $\Phi$ and $\phi$ are the standard Gaussian distribution function and density. In this case \eqref{eq:RS-fixed-point} is equivalent to \eqref{eq:system} above.

There are two ways to understand the fixed point equation \eqref{eq:RS-fixed-point}.
One way is from replica computations: the moment calculation of $Z_N$ becomes a saddle point problem over order parameters, and \eqref{eq:RS-fixed-point} gives the self-consistency condition.
Another way is from the TAP/AMP description of the Gibbs measure of $Z_N$. In this view, $q$ and $r$ describe macroscopic second moments of the AMP iterations, and \eqref{eq:RS-fixed-point} follows from the AMP state evolution \cite{DingSun2025,BolthausenNakajimaSunXu2022}.

\subsection{Sharp thresholds and capacity results}

There is also work on sharp thresholds for feasibility.
For Bernoulli disorder versions of the half-space model, Xu proved a sharp threshold phenomenon from $\{Z_{N,M}^{(\kappa)}>0\}$ to $\{Z_{N,M}^{(\kappa)}=0\}$ of the half-space model \cite{Xu2021} in the sense of \eqref{eq:sharp-seq-exp}.
For general disorder (including Gaussian disorder) and more general $\{0,1\}$-valued activation functions, Sun and Nakajima proved self-averaging of the free energy, the existence of a sharp threshold sequence, and universality with respect to the disorder \cite{NakajimaSun2022}.
These results suggest a clear picture: feasibility transitions occur in a narrow window as $\alpha$ varies, though the window may depend on $N$ in those work.

Another line of work aims to determine the critical capacity of the Gaussian half-space model in the zero-margin case $\kappa=0$.
Ding and Sun proved, conditional on an additional analytic hypothesis, a lower bound matching the constant $\alpha_\star\approx 0.833$ predicted by Krauth and M\'ezard, using a second-moment argument together with an AMP-motivated truncation \cite{DingSun2025}.
More recently, Huang obtained a conditionally matching upper bound under a related maximization assumption, yielding a conditional verification of the Krauth--M\'ezard prediction \cite{Huang2024}.
We summarize the streamlined version of their results here.
\begin{theorem}[\cite{DingSun2025}; \cite{Xu2021}; \cite{NakajimaSun2022}; \cite{Huang2024}]\label{thm:conditional_RS_threshold_kappa0}
Let $\kappa=0$. Then there exists $\alpha_\star(0)\in(0,\infty)$ such that
\[
\mathrm{RS}_\star(\alpha,0)>0 \quad \text{for } \alpha<\alpha_\star(0),
\qquad
\mathrm{RS}_\star(\alpha,0)<0 \quad \text{for } \alpha>\alpha_\star(0).
\]
Under the analytic hypotheses of \cite{DingSun2025,Huang2024}, it holds that 
\[
\lim_{N\to\infty}\mathbb P\bigl(Z^{(0)}_{N,\lfloor \alpha N\rfloor}>0\bigr)
=
\begin{cases}
1, & \alpha<\alpha_\star(0),\\
0, & \alpha>\alpha_\star(0).
\end{cases}
\]
\end{theorem}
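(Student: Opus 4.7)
The plan is to apply the envelope theorem to $\mathrm{RS}_\star(\alpha,\kappa)=\mathcal F_\kappa(\alpha;q_\alpha,r_\alpha)$, show that $d\mathrm{RS}_\star/d\alpha$ diverges to $-\infty$ at a non-integrable rate as $\alpha\uparrow\alpha_c(\kappa)$, and integrate. Since $(q_\alpha,r_\alpha)$ is a saddle of the smooth functional $\mathcal F_\kappa(\alpha;\cdot,\cdot)$ and is $C^1$ in $\alpha$ (by the implicit function theorem applied at the unique root from Theorem~\ref{thm:main}), the envelope theorem gives
\[
\frac{d}{d\alpha}\mathrm{RS}_\star(\alpha,\kappa)
\;=\;
\mathbb E\!\left[\log\overline\Phi\!\left(u_{q_\alpha}(Z)\right)\right],
\qquad u_q(z):=\frac{\kappa-\sqrt q\,z}{\sqrt{1-q}}.
\]

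I bound this derivative via Mills. The inequality $\overline\Phi(u)\le\phi(u)/u$ for $u>0$ yields $\log\overline\Phi(u)\le-u^2/2-\tfrac{1}{2}\log(2\pi)$ for $u\ge1$. Splitting $\mathbb E[\log\overline\Phi(u_q(Z))]$ on $\{u_q(Z)\ge 1\}$ (where this bound applies) versus its complement (where $\log\overline\Phi\le 0$), and using that $\mathbb E[\mathbf{1}_{\{u_q(Z)\ge 1\}}(\kappa-\sqrt q\,Z)^2]\to\mathbb E[(\kappa-Z)_+^2]=C_\kappa$ as $q\to 1$, I obtain
\[
\mathbb E\!\left[\log\overline\Phi\!\left(u_q(Z)\right)\right]
\;\le\;
-\,\frac{C_\kappa}{3(1-q)}
\]
for $q$ sufficiently close to $1$.

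The heart of the argument is to refine Theorem~\ref{thm:2ndmain} into the quantitative rate $1-q_\alpha\le C(\alpha_c-\alpha)$. A Laplace-type expansion of $P(r)=\mathbb E[\tanh^2(\sqrt r\,Z)]$ gives $1-q=\sqrt{2/(\pi r)}\,(1+O(1/r))$, so $r(1-q)^2=2/\pi+O(1/r)$; combined with $r(1-q)^2=\alpha B(q)$ from \eqref{eq:r_in_terms_of_B}, this yields $\alpha B(q_\alpha)\to 2/\pi=\alpha_c C_\kappa$. Using the expansion
\[
B(q) \;=\; C_\kappa+\Phi(\kappa)(1-q)+O\big((1-q)^{3/2}\big),
\]
obtained by splitting \eqref{eq:defB} into a bulk $\{Z\le\kappa-\sqrt{1-q}\}$ where the Mills expansion $E(u)^2=u^2+2+O(1/u^2)$ for large $u$ applies, a boundary strip of width $O(\sqrt{1-q})$ around $Z=\kappa$ on which $(1-q)E(u_q(Z))^2$ is bounded by a multiple of $1-q$, and a negligible Gaussian tail, we conclude
\[
(\alpha_c-\alpha)\,C_\kappa
\;=\;
\alpha\,\Phi(\kappa)\,(1-q_\alpha)+O\big((1-q_\alpha)^{3/2}\big),
\]
so $1-q_\alpha\le C(\alpha_c-\alpha)$ near $\alpha_c$.

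Combining the two bounds gives $-d\mathrm{RS}_\star/d\alpha\ge c'/(\alpha_c-\alpha)$ for some $c'>0$ and all $\alpha$ close to $\alpha_c$. Fixing any $\alpha_0\in(0,\alpha_c)$ and integrating from $\alpha_0$ to $\alpha$,
\[
\mathrm{RS}_\star(\alpha_0,\kappa)-\mathrm{RS}_\star(\alpha,\kappa)
\;\ge\;
c'\log\frac{\alpha_c-\alpha_0}{\alpha_c-\alpha}
\;\to\;+\infty\quad(\alpha\uparrow\alpha_c),
\]
so $\mathrm{RS}_\star(\alpha,\kappa)\to-\infty$. The main obstacle is the next-order expansion of $B(q)$: the Mills asymptotics for $E(u)$ break down near $u=0$, i.e., near $Z=\kappa$, so one must split the integration range carefully and control the boundary strip to recover the exact $\Phi(\kappa)$ coefficient of the linear term, which is precisely what ensures $1/(1-q_\alpha)$ is non-integrable in $\alpha$ near $\alpha_c$.
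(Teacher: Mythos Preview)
Your proposal does not address the stated theorem. Theorem~\ref{thm:conditional_RS_threshold_kappa0} is not proved in this paper at all: it is a summary of results imported from \cite{DingSun2025,Xu2021,NakajimaSun2022,Huang2024}, and the paper gives no argument for it beyond the citations. What you have written is instead a (variant) proof of Theorem~\ref{thm: bound for threshold}, namely $\mathrm{RS}_\star(\alpha,\kappa)\to-\infty$ as $\alpha\uparrow\alpha_c(\kappa)$. That conclusion is neither the first clause of the stated theorem (existence of a \emph{unique} sign-change point $\alpha_\star(0)$, which would additionally require monotonicity of $\alpha\mapsto\mathrm{RS}_\star(\alpha,0)$ and positivity near $\alpha=0$; you compute the envelope derivative but never use that $\mathbb E[\log\overline\Phi(u_{q_\alpha}(Z))]<0$ to deduce strict monotonicity) nor the second clause (the conditional feasibility threshold for $Z^{(0)}_{N,\lfloor\alpha N\rfloor}$, which relies on the second-moment/AMP machinery of the cited papers and is entirely absent from your argument).

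Even viewed as an alternative route to Theorem~\ref{thm: bound for threshold}, your approach differs from the paper's and carries an extra burden. The paper bounds $\mathrm{RS}_\star(\alpha_n,\kappa)$ directly: it uses $\log\cosh x\le x\tanh x$ and the fixed point relation to control the spin term, the Mills bound on $\log\overline\Phi$ to extract a $\tfrac12\Phi(\kappa-\delta)\log\varepsilon_n$ from the constraint term, and the elementary estimate $0\le B(q_n)-A_n\le C_0\varepsilon_n$ to make the $1/\varepsilon_n$ contributions cancel. No rate for $1-q_\alpha$ in terms of $\alpha_c-\alpha$ is needed. Your envelope approach, by contrast, requires the quantitative estimate $1-q_\alpha\le C(\alpha_c-\alpha)$, which in turn rests on a second-order expansion $B(q)=C_\kappa+\Phi(\kappa)(1-q)+O((1-q)^{3/2})$ that you only sketch; getting the linear coefficient right (and in particular ruling out a vanishing or sublinear leading term) is exactly the delicate boundary-strip analysis you flag, and without it the non-integrability of $1/(1-q_\alpha)$ in $\alpha$ is not established.
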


Motivated by this, it is natural to view the replica-symmetric prediction as defining a candidate feasibility threshold via the point at which the predicted limiting free energy vanishes. These works show that understanding the replica-symmetric equations, and in particular the structure of the saddle point system, is important for rigorous capacity bounds.

\subsection{Main result: uniqueness of the replica-symmetric saddle point} 

The replica-symmetric formula is most useful in parameter regimes where the saddle point equation \eqref{eq:RS-fixed-point} has a unique solution $(q,r)$. Uniqueness matters for several reasons. It eliminates ambiguity in selecting a branch of solutions, yields regular dependence of $(q,r)$ on the parameters $(\alpha,\kappa)$, and controls the derivatives of the replica-symmetric free energy \eqref{eq:RS-functional}. 

Related work of Ding and Sun \cite{DingSun2025} established the uniqueness when $\kappa=0$, with one step verified via a computer-assisted numerical check.

In this paper we give a fully analytic proof of uniqueness for the half-space Ising perceptron. Concretely, when $U(x)=\mathbf{1}\{x\ge \kappa\}$ with $\kappa\ge 0$, we show that the map in \eqref{eq:RS-fixed-point} has at most one fixed point in the relevant domain. As a consequence, the replica-symmetric free energy can be written unambiguously as
\[
\mathrm{RS}(\alpha;U)
=
\mathrm{RS}(\alpha;U;q(\alpha),r(\alpha)),
\]
where $(q(\alpha),r(\alpha))$ is the unique solution to \eqref{eq:RS-fixed-point}.

\subsection{Connection to the multi-label classification}

The Ising perceptron is closely related to the multi-label classification with random labels. 
Consider random inputs $g^1,\dots,g^M\in\mathbb R^N$ and $L$-dimensional outputs $y^m\in\{-1,1\}^L$. Assume that the entries of $g^m$ are i.i.d., and that the labels $(y^m_\ell)$ are i.i.d.\ uniform on $\{-1,1\}$ and independent of the inputs $(g^m)$. We write $[n]:=[1,n]\cap \mathbb Z$. 
A binary weight matrix $W=(w_{\ell i})\in\{-1,1\}^{L\times N}$ satisfies the multi-output classification constraints if
\[\operatorname{sgn}\bigl(W g^m\bigr)=y^m,\quad\text{for all } m\in[M],
\]
 which is written as
 \[
\operatorname{sgn}\bigl(\langle w^\ell,g^m\rangle\bigr)=y^m_\ell
\quad\text{for all } m\in[M],\ \ell\in[L],\]
where $w^\ell$ is the $\ell$-th row of $W$. Equivalently, since $\langle w^\ell,g^m\rangle\neq 0$ almost surely,
\[
y^m_\ell\,\langle w^\ell,g^m\rangle \ge 0
\quad\text{for all } m,\ell.
\]
If the input distribution is symmetric, meaning $g^m\stackrel d= -g^m$, then the label-flipped patterns
$\tilde g^{m,\ell}:=y^m_\ell g^m$ satisfy $\tilde g^{m,\ell}\stackrel d=g^m$ for each fixed $\ell$. Therefore, for each $\ell$ the feasibility event for row $\ell$ has the same distribution as $\{Z_{N,M}^{(0)}>0\}$.
Let $\mathcal E$ be the event that there is \emph{no} matrix solution $W$. Then
\[
\mathcal E=\bigcup_{\ell=1}^L\Bigl\{\text{no feasible } w^\ell \in\{-1,1\}^N \text{ for output }\ell\Bigr\}.
\]
By a union bound, we obtain the bounds
\begin{equation}\label{eq:union-bound}
\mathbb P(Z_{N,M}^{(0)}=0)
\le
\mathbb P(\mathcal E)
\le
L\,\mathbb P(Z_{N,M}^{(0)}=0).
\end{equation}
The same reduction applies with a positive margin: if one requires $y^m_\ell \langle w^\ell,g^m\rangle/\sqrt N\ge \kappa$ for all $m\in[M]$ and $\ell\in[L]$, then the analogue of \eqref{eq:union-bound} holds with $Z_{N,M}^{(\kappa)}$.

We use the sharp-threshold sequence proved in \cite{Xu2021, NakajimaSun2022} to summarize the conclusion.
Fix $\kappa\ge 0$, and write $M=\lfloor \alpha N\rfloor$.
Assume there exists a sequence $\alpha_N=\alpha_N(\kappa)$ such that for every fixed $\varepsilon>0$ there is $c_\varepsilon>0$ with, for all large $N$,
\begin{equation}\label{eq:sharp-seq-exp}
\begin{split}
 & \alpha\le \alpha_N-\varepsilon
\ \Longrightarrow\
\mathbb P\!\left(Z_{N,\lfloor \alpha N\rfloor}^{(\kappa)}=0\right)\le e^{-c_\varepsilon N},\\
&\alpha\ge \alpha_N+\varepsilon
\ \Longrightarrow\
\mathbb P\!\left(Z_{N,\lfloor \alpha N\rfloor}^{(\kappa)}=0\right)\ge 1-e^{-c_\varepsilon N}.
\end{split}
\end{equation}

If $\alpha\le \alpha_N-\varepsilon$, then
\[
\mathbb P(\mathcal E)\le L\,\mathbb P\!\left(Z_{N,\lfloor \alpha N\rfloor}^{(\kappa)}=0\right)\le L\,e^{-c_\varepsilon N},
\]
so whenever $\log L=o(N)$ (e.g., fixed $L$ or polynomial $L$), the probability $\mathbb P(\mathcal E)$ still decays exponentially in $N$. If $\alpha\ge \alpha_N+\varepsilon$, then
\[
\mathbb P(\mathcal E)\ge \mathbb P\!\left(Z_{N,\lfloor \alpha N\rfloor}^{(\kappa)}=0\right)\ge 1-e^{-c_\varepsilon N},
\]
which converges to $1$ as $N\to \infty$.

Thus, except for a narrow window, the two models behave in the same way.

\subsection{How we use GPT-5 in this project}
We used GPT-5 as a tool for computation and reasoning throughout the project (in particular, Lemma~\ref{lem:g-decreasing-positive}, \ref{lem:g-uniform-negative-TOBECHECKED} below). In a typical workflow, we supplied a problem together with a proposed proof strategy, and asked GPT-5 to suggest an outline and to elaborate intermediate steps. Because the outputs contained gaps or errors, we independently verified the details and either corrected them ourselves or asked GPT-5 to address specific issues. We repeated this process iteratively until reaching the final version of each proof. All arguments in the manuscript are fully analytic and were not computer-verified. Overall, we found generative AI helpful for converting informal sketches or computer-assisted arguments into fully rigorous analytic proofs. Numerical computer-assisted proofs typically depend on approximating a large number of quantities, while an analytic proof often reduces the problem to a small set of key estimates. This type of reduction appears to match well with the strengths of generative AI.

\section{Auxiliary results for Theorem~\ref{thm:main}}
\subsection{Roadmap of the proof}
The aim of this section is to reduce the fixed point equation \eqref{eq:system} to a one-dimensional equation and to isolate the main technical ingredient needed for Theorem~\ref{thm:main} (the strict monotonicity of $B$, proved in Section~3).

Recall that, with $B$ defined in \eqref{eq:defB}, the second equation in \eqref{eq:system} can be rewritten as \eqref{eq:r_in_terms_of_B}:
\[
r=\frac{\alpha}{(1-q)^2}\,B(q).
\]
Substituting $q=P(r)$ yields
\[
r\bigl(1-P(r)\bigr)^2=\alpha\,B\bigl(P(r)\bigr).
\]
Define
\begin{equation}\label{eq:defA}
A(r):=r\bigl(1-P(r)\bigr)^2
=r\Bigl(\mathbb E\big[\operatorname{sech}^2(\sqrt{r}\,Z)\big]\Bigr)^2,\qquad r\ge 0.
\end{equation}
Letting $q=P(r)$,
 solving \eqref{eq:system} is equivalent to solving the one-dimensional equation
\begin{equation}\label{eq:one-dimensional}
A(r)=\alpha\,B\bigl(P(r)\bigr),\qquad r\ge 0,
\end{equation}

The remaining ingredients are standard: $P$ is strictly increasing (Lemma~\ref{lem:P_properties-TOBECHECKED}); $A$ is strictly increasing with range $[0,2/\pi)$ (Lemma~\ref{lem:A-TOBECHECKED}); and $B$ is finite/continuous on $[0,1)$ with endpoint limits $B(0)=E(\kappa)^2$ and $\lim_{q\uparrow 1}B(q)=C_\kappa$ (Lemmas~\ref{lem:B_cont0-TOBECHECKED} and~\ref{lem:B_endpoints-TOBECHECKED}). Once $B$ is known to be strictly decreasing, the difference $A(r)-\alpha B(P(r))$ is strictly increasing and has at most one root, and comparing endpoint limits yields existence/nonexistence with threshold $\alpha_c(\kappa)=2/(\pi C_\kappa)$.

\subsection{Properties of $P$}
\begin{lemma}\label{lem:P_properties-TOBECHECKED}
The function $P:[0,\infty)\to[0,1)$ defined by
\[
P(r)=\mathbb E\big[\tanh^2(\sqrt r\,Z)\big]
\]
is continuous and strictly increasing, satisfies $P(0)=0$ and $P(r)<1$ for all finite $r$, and
\[
\lim_{r\to\infty}P(r)=1.
\]
\end{lemma}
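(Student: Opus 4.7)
The plan is to verify all five properties by reducing each to an elementary observation about the scalar function $x\mapsto\tanh^2(x)$ combined with dominated convergence; no deep calculation is required because $|\tanh^2|\le 1$ gives an immediate uniform envelope.

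First I would handle the boundary and pointwise facts. The value $P(0)=0$ is immediate from $\tanh(0)=0$. The bound $P(r)<1$ for finite $r$ follows because $\tanh^2(\sqrt r\,z)<1$ for every $z\in\mathbb R$, so the strict inequality survives under $\mathbb E$. Continuity of $P$ on $[0,\infty)$ follows from dominated convergence: for any $r_n\to r$ in $[0,\infty)$, $\tanh^2(\sqrt{r_n}\,z)\to\tanh^2(\sqrt r\,z)$ for every $z$, with uniform bound $1\in L^1(\phi)$.

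For strict monotonicity I would avoid differentiation entirely. The key observation is that $t\mapsto\tanh^2(t)$ is even and strictly increasing on $[0,\infty)$, so for any fixed $z\neq 0$ the map $r\mapsto\tanh^2(\sqrt r\,z)=\tanh^2(\sqrt r\,|z|)$ is strictly increasing in $r\in[0,\infty)$. Given $0\le r_1<r_2$, integrating the strict pointwise inequality against the Gaussian density (which assigns zero mass to $\{z=0\}$) yields $P(r_1)<P(r_2)$.

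Finally, for $\lim_{r\to\infty}P(r)=1$, I would again invoke dominated convergence: $\tanh^2(\sqrt r\,z)\to 1$ as $r\to\infty$ for every $z\neq 0$, the exceptional set $\{z=0\}$ has Gaussian measure zero, and the integrand is dominated by $1$. I do not anticipate a main obstacle here; the only subtlety is the use of the even/odd structure of $\tanh^2$ to avoid computing $P'(r)$ and wrestling with signs, but the monotone-in-$r$ reformulation above sidesteps that entirely.
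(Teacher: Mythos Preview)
Your proposal is correct and essentially identical to the paper's own proof: both use the pointwise monotonicity of $r\mapsto\tanh^2(\sqrt r\,z)$ for $z\neq 0$ together with the uniform envelope $1$ and dominated convergence to obtain continuity, strict monotonicity, and the limit at infinity. Your explicit mention of the evenness of $\tanh^2$ is a minor clarification the paper leaves implicit, but otherwise the arguments match step for step.
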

\begin{proof}
For each fixed $z\in\mathbb R$, the map $r\mapsto \tanh^2(\sqrt r\,z)$ is nondecreasing and continuous on $[0,\infty)$. Since $0\le \tanh^2(\sqrt r\,Z)\le 1$, dominated convergence yields continuity of $P$ and preserves monotonicity.

To see strictness, fix $0\le r_1<r_2$. Then $\tanh^2(\sqrt{r_1}\,z)<\tanh^2(\sqrt{r_2}\,z)$ for every $z\neq 0$, and $\mathbb{P}(Z=0)=0$, hence $P(r_1)<P(r_2)$.

The identity $P(0)=0$ is immediate. If $r<\infty$, then $\tanh^2(\sqrt r\,z)<1$ for every $z\in\mathbb R$, hence $P(r)<1$. Finally, when $r\to\infty$ one has $\tanh^2(\sqrt r\,z)\to 1$ for every $z\ne 0$, so dominated convergence yields $P(r)\to 1$.
\end{proof}

\subsection{The function $A$: monotonicity and range}
\begin{lemma}\label{lem:A-TOBECHECKED}
The function $A:[0,\infty)\to[0,\infty)$ defined in \eqref{eq:defA} is continuous on $[0,\infty)$ and strictly increasing on $(0,\infty)$. Moreover,
\[
A(0)=0,
\qquad
\lim_{r\to\infty}A(r)=\frac{2}{\pi}.
\]
In particular, $A$ is a bijection from $[0,\infty)$ onto $[0,2/\pi)$.
\end{lemma}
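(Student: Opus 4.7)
The plan is to reduce everything to a single-variable analysis of
$G(u):=u\,\mathbb E[\operatorname{sech}^2(uZ)]$,
with the clean identity $A(r)=G(\sqrt r)^2$. The key move is a change of variables: for $u>0$, setting $w=uz$ in the defining integral gives
\[
G(u)
=\frac{1}{\sqrt{2\pi}}\int_{-\infty}^{\infty}\operatorname{sech}^2(w)\,e^{-w^2/(2u^2)}\,dw.
\]
In this form, the dependence on $u$ appears only through the Gaussian factor $e^{-w^2/(2u^2)}$, which is pointwise strictly increasing in $u\in(0,\infty)$ for every $w\ne 0$ and dominated by the integrable function $\operatorname{sech}^2(w)$.

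From this representation, both the strict monotonicity and the limit follow at once. For strict monotonicity: if $0<u_1<u_2$, then $e^{-w^2/(2u_1^2)}<e^{-w^2/(2u_2^2)}$ for all $w\ne 0$, and since $\operatorname{sech}^2(w)>0$ everywhere, integrating gives $G(u_1)<G(u_2)$. Composing with $r\mapsto\sqrt r$ and squaring (using $G\ge 0$), $A(r)=G(\sqrt r)^2$ is strictly increasing on $(0,\infty)$. For the limit, monotone convergence as $u\to\infty$ yields
\[
G(u)\ \longrightarrow\ \frac{1}{\sqrt{2\pi}}\int_{-\infty}^{\infty}\operatorname{sech}^2(w)\,dw
=\frac{2}{\sqrt{2\pi}}=\sqrt{2/\pi},
\]
so $A(r)=G(\sqrt r)^2\to 2/\pi$.

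Continuity is routine: $P$ is continuous on $[0,\infty)$ by Lemma~\ref{lem:P_properties-TOBECHECKED}, hence so is $A(r)=r(1-P(r))^2$, and $A(0)=0$ is immediate from the factor $r$. The only genuinely nontrivial point is the strict monotonicity on $(0,\infty)$, and I expect this to be the main obstacle if one tries a direct route via $A'(r)=(1-P(r))\bigl[(1-P(r))-2rP'(r)\bigr]$, since establishing $(1-P(r))>2rP'(r)$ would require a nontrivial comparison of two Gaussian expectations involving $\operatorname{sech}^2$ and $\tanh\cdot\operatorname{sech}^2$. The change-of-variables representation of $G$ bypasses this entirely by making the monotonicity manifest at the integrand level, and the same representation simultaneously delivers the sharp limit value $2/\pi$ via the elementary antiderivative of $\operatorname{sech}^2$.
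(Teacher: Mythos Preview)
Your proof is correct and follows essentially the same approach as the paper: both perform the change of variables $w=\sqrt{r}\,z$ to rewrite $A(r)$ as $\frac{1}{2\pi}\bigl(\int_{\mathbb R}\operatorname{sech}^2(w)\,e^{-w^2/(2r)}\,dw\bigr)^2$, then exploit the manifest monotonicity of the integrand in $r$ (the paper differentiates, you compare pointwise) and monotone convergence for the limit. Your parameterization via $G(u)$ with $u=\sqrt r$ is a cosmetic variant of the paper's $I(r)$, and the arguments for continuity and $A(0)=0$ are identical.
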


\begin{proof}
For $r>0$, let
\[
S(r):=\mathbb E\big[\operatorname{sech}^2(\sqrt{r}\,Z)\big]
=\int_{\mathbb R}\operatorname{sech}^2(\sqrt{r}\,z)\,\phi(z)\,dz.
\]
Then $A(r)=rS(r)^2$. Make the change of variables $y=\sqrt{r}\,z$ to obtain
\[
S(r)=\frac{1}{\sqrt{2\pi r}}\int_{\mathbb R}\operatorname{sech}^2(y)\,e^{-y^2/(2r)}\,dy.
\]
Hence
\begin{equation}\label{eq:A_as_I}
A(r)=\frac{1}{2\pi}\,I(r)^2,
\qquad
I(r):=\int_{\mathbb R}\operatorname{sech}^2(y)\,e^{-y^2/(2r)}\,dy.
\end{equation}
For $r>0$ we may differentiate under the integral sign:
\[
I'(r)=\int_{\mathbb R}\operatorname{sech}^2(y)\,e^{-y^2/(2r)}\,\frac{y^2}{2r^2}\,dy>0.
\]
Therefore $I$ is strictly increasing on $(0,\infty)$, hence so is $A$ by \eqref{eq:A_as_I}. Continuity of $A$ follows from continuity of $P$ (Lemma~\ref{lem:P_properties-TOBECHECKED}) and the representation $A(r)=r(1-P(r))^2$.

Finally, for each $y$ one has $e^{-y^2/(2r)}\uparrow 1$ as $r\to\infty$, so by monotone convergence,
\[
\lim_{r\to\infty}I(r)=\int_{\mathbb R}\operatorname{sech}^2(y)\,dy
=\bigl[\tanh(y)\bigr]_{-\infty}^{\infty}=2.
\]
Using \eqref{eq:A_as_I} yields $\lim_{r\to\infty}A(r)=\frac{1}{2\pi}\cdot 2^2=\frac{2}{\pi}$.
\end{proof}

\subsection{The function $B$: finiteness, continuity, and endpoint values}
In this subsection, we record the basic analytic properties of $B$; the strict monotonicity of $B$ is deferred to Section~3.

The following lemma provides a simple bound for the inverse Mills ratio. Since the proof is elementary, we defer it to the appendix.
\begin{lemma}\label{lem:Mills_bound-TOBECHECKED}
Recall $E(u)=\phi(u)/\overline{\Phi}(u)$. There exists a finite constant $C>0$ such that for all
$u\in\mathbb R$,
\[
0<E(u)=\frac{\phi(u)}{\overline{\Phi}(u)} \le u_+ + C,\qquad u_+:=\max\{u,0\}.
\]
Moreover, for all $u>0$,
\[
u<\frac{\phi(u)}{\overline{\Phi}(u)}\le u+\frac{1}{u}.
\]
\end{lemma}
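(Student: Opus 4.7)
The plan is to prove the ``moreover'' clause first by the classical Mills-ratio arguments, and then deduce the global bound by a routine case split at $u=1$.

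For the lower bound $E(u)>u$ on $(0,\infty)$, I would introduce $f(u):=\phi(u)/u-\overline{\Phi}(u)$. A direct differentiation gives
\[
f'(u)=\frac{\phi'(u)}{u}-\frac{\phi(u)}{u^{2}}+\phi(u)=-\phi(u)-\frac{\phi(u)}{u^{2}}+\phi(u)=-\frac{\phi(u)}{u^{2}}<0,
\]
and $f(u)\to 0$ as $u\to\infty$ since $\phi$ decays super-polynomially. Hence $f>0$ on $(0,\infty)$, which is equivalent to $\overline{\Phi}(u)<\phi(u)/u$, i.e.\ $E(u)>u$.

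For the upper bound $E(u)\le u+1/u$ on $(0,\infty)$, I would introduce the auxiliary function $h(u):=(u^{2}+1)\overline{\Phi}(u)-u\,\phi(u)$. Using $\overline{\Phi}'=-\phi$ and $\phi'(u)=-u\phi(u)$, a short computation gives
\[
h'(u)=2u\,\overline{\Phi}(u)-2\phi(u),
\]
which is strictly negative by the lower bound just proved. Combined with $h(u)\to 0$ as $u\to\infty$ (obtainable from the asymptotic $\overline{\Phi}(u)\sim\phi(u)/u$), this forces $h>0$ on $(0,\infty)$, and rearranging gives $E(u)<(u^{2}+1)/u=u+1/u$.

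For the global bound $E(u)\le u_{+}+C$, I would split at $u=1$. When $u\ge 1$, the ``moreover'' bound yields $E(u)\le u+1/u\le u+1=u_{+}+1$. When $u\le 1$, the denominator satisfies $\overline{\Phi}(u)\ge \overline{\Phi}(1)>0$ while $\phi(u)\le \phi(0)$, so $E(u)\le \phi(0)/\overline{\Phi}(1)=:C_{0}$. Taking $C:=\max\{1,C_{0}\}$ covers both regions, and positivity of $E$ is immediate from $\phi>0$ and $\overline{\Phi}>0$. There is no real obstacle here: the only point that requires minor care is verifying the boundary behavior $h(\infty)=0$, for which one can use the elementary estimate $\overline{\Phi}(u)\in(u\phi(u)/(u^{2}+1),\phi(u)/u)$ obtained above.
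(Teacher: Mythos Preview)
Your argument is correct and follows a classical route that differs from the paper's. The paper obtains both Mills inequalities from a single integration by parts,
\[
\overline{\Phi}(u)=\frac{\phi(u)}{u}-\int_u^\infty\frac{\phi(t)}{t^2}\,dt,
\]
reading off $\overline{\Phi}(u)<\phi(u)/u$ immediately and then bounding the integral by $\overline{\Phi}(u)/u^2$ to get the other direction. Your approach instead builds two auxiliary functions $f(u)=\phi(u)/u-\overline{\Phi}(u)$ and $h(u)=(u^2+1)\overline{\Phi}(u)-u\phi(u)$, shows each is strictly decreasing with limit $0$ at infinity, and concludes positivity on $(0,\infty)$. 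Both are standard; the paper's is a bit more economical, while yours is arguably more self-contained in that the derivative computations are fully explicit. The case split at $u=1$ and the global bound $E(u)\le u_++C$ are handled identically in both.

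One small point: your parenthetical justification of $h(\infty)=0$ appeals to the two-sided estimate $\overline{\Phi}(u)\in(u\phi(u)/(u^2+1),\phi(u)/u)$ ``obtained above,'' but the lower half of that interval is precisely what $h>0$ asserts, so invoking it here is circular. The clean fix is to use only the upper bound $\overline{\Phi}(u)<\phi(u)/u$ (already proved) together with the trivial $\overline{\Phi}(u)>0$: these give $-u\phi(u)<h(u)<\phi(u)/u$, and both ends tend to $0$, forcing $h(u)\to 0$.
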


\begin{lemma}\label{lem:B_cont0-TOBECHECKED}
The function
\[
B(q)=(1-q)\,\mathbb E\!\left[E\!\left(\frac{\kappa-\sqrt{q}\,Z}{\sqrt{1-q}}\right)^2\right]
\]
is finite and continuous for all $q\in[0,1)$.
\end{lemma}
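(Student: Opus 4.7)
The plan is to bound the integrand by a locally $q$-uniform integrable envelope using Lemma~\ref{lem:Mills_bound-TOBECHECKED}, and then invoke dominated convergence for both finiteness and continuity. Write $U(q,z):=(\kappa-\sqrt{q}\,z)/\sqrt{1-q}$, so that the integrand equals $(1-q)E(U(q,Z))^2$. Lemma~\ref{lem:Mills_bound-TOBECHECKED} yields $E(u)\le u_++C$, so $E(u)^2\le 2u_+^2+2C^2$, and since $U(q,z)_+\le (\kappa+\sqrt{q}\,|z|)/\sqrt{1-q}$, we get
\[
(1-q)\,E(U(q,z))^2\ \le\ 2\bigl(\kappa+\sqrt{q}\,|z|\bigr)^2+2C^2(1-q)\ \le\ 2(\kappa+|z|)^2+2C^2,
\]
which is independent of $q\in[0,1)$ and has finite expectation under $Z\sim\mathcal N(0,1)$. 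This bound immediately gives finiteness of $B(q)$ on $[0,1)$.

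For continuity at a fixed $q_0\in[0,1)$, I would restrict to $q$ in a closed neighborhood $[q_0-\delta,q_0+\delta]\subset[0,1)$; on such a neighborhood the same envelope $2(\kappa+|Z|)^2+2C^2$ dominates the integrand. For each fixed $z\in\mathbb R$, the map $q\mapsto U(q,z)$ is continuous on $[0,1)$, and $E(\cdot)=\phi(\cdot)/\overline{\Phi}(\cdot)$ is continuous on $\mathbb R$ because $\overline{\Phi}$ is continuous and strictly positive. Hence $q\mapsto (1-q)E(U(q,z))^2$ is continuous pointwise in $z$, and dominated convergence gives $B(q)\to B(q_0)$ as $q\to q_0$.

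The only minor subtlety is that the bound $E(u)\le u_++C$ must be used in squared form and multiplied by $(1-q)$ to cancel the $1/(1-q)$ arising from $u_+^2$; once this cancellation is carried out the envelope is clean and $q$-independent, and no further work is needed. There is no genuine obstacle in this lemma; the argument is a straightforward application of the Mills-ratio bound together with dominated convergence.
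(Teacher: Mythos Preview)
Your proposal is correct and follows essentially the same approach as the paper: both use Lemma~\ref{lem:Mills_bound-TOBECHECKED} to get $E(u)^2\le 2u_+^2+2C^2$, multiply by $(1-q)$ to cancel the denominator and obtain a $q$-independent integrable envelope of the form $c(\kappa+|Z|)^2+c'$, and then apply dominated convergence. The only cosmetic difference is that the paper bounds $(U_+)^2\le U^2$ directly whereas you bound $U_+\le(\kappa+\sqrt{q}|z|)/\sqrt{1-q}$, but this leads to the same envelope.
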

\begin{proof}
Fix $q_0\in[0,1)$ and let $q_n\in [0,1)$ with $q_n\to q_0$. Define
\[
U_n:=\frac{\kappa-\sqrt{q_n}\,Z}{\sqrt{1-q_n}},\qquad
U_0:=\frac{\kappa-\sqrt{q_0}\,Z}{\sqrt{1-q_0}}.
\]
Then $U_n\to U_0$ almost surely, hence $(1-q_n)E(U_n)^2\to (1-q_0)E(U_0)^2$ almost surely.

To apply dominated convergence, use Lemma~\ref{lem:Mills_bound-TOBECHECKED}:
\[
E(U_n)^2\le (U_n^+ + C)^2\le 2\big((U_n^+)^2+C^2\big)\le 2(U_n^2+C^2).
\]
Multiplying by $(1-q_n)$ gives
\[
(1-q_n)E(U_n)^2
\le 2\big((1-q_n)U_n^2 + C^2(1-q_n)\big)
=2\big((\kappa-\sqrt{q_n}\,Z)^2 + C^2(1-q_n)\big).
\]
Since $0\le q_n\le 1$ eventually, $(\kappa-\sqrt{q_n}\,Z)^2\le 2(\kappa^2+Z^2)$ and
$(1-q_n)\le 1$, so
\[
(1-q_n)E(U_n)^2\le 4(\kappa^2+Z^2)+2C^2.
\]
The right-hand side is integrable because $\mathbb E[Z^2]=1$. Therefore dominated convergence
applies, and we obtain $B(q_n)\to B(q_0)$. Finiteness follows from the same domination.
\end{proof}

\begin{lemma}\label{lem:B_endpoints-TOBECHECKED}
Let $B$ be defined by \eqref{eq:defB}. Then
\[
B(0)=E(\kappa)^2,
\qquad
\lim_{q\uparrow 1}B(q)=C_{\kappa}=\mathbb E\big[(\kappa-Z)_+^2\big]=(\kappa^2+1)\Phi(\kappa)+\kappa\phi(\kappa).
\]
\end{lemma}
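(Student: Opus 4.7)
The value $B(0)=E(\kappa)^2$ is immediate: setting $q=0$ in \eqref{eq:defB} gives $B(0)=\mathbb E[E(\kappa)^2]=E(\kappa)^2$, since $E(\kappa)$ is a deterministic constant. The substantive part is the limit as $q\uparrow 1$.

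The plan is to identify the pointwise limit of the integrand and justify passage to the limit via dominated convergence. Write $U_q:=(\kappa-\sqrt q\,Z)/\sqrt{1-q}$, so that $B(q)=\mathbb E[(1-q)\,E(U_q)^2]$. On the event $\{Z<\kappa\}$, the numerator $\kappa-\sqrt q\,Z$ is eventually positive and tends to $\kappa-Z>0$, while $\sqrt{1-q}\downarrow 0$, so $U_q\to+\infty$; by the sharp two-sided Mills estimate in Lemma~\ref{lem:Mills_bound-TOBECHECKED}, $U_q<E(U_q)\le U_q+1/U_q$, hence $E(U_q)/U_q\to 1$ and therefore
\[
(1-q)E(U_q)^2\;=\;(1-q)U_q^2\cdot\bigl(E(U_q)/U_q\bigr)^2\;\longrightarrow\;(\kappa-Z)^2.
\]
On the event $\{Z>\kappa\}$, we have $U_q\to-\infty$, so $\overline\Phi(U_q)\to 1$ and $\phi(U_q)\to 0$ super-polynomially, giving $(1-q)E(U_q)^2\to 0$. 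Combining, the integrand converges almost surely to $(\kappa-Z)_+^2$.

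For the domination, Lemma~\ref{lem:Mills_bound-TOBECHECKED} gives $E(U_q)\le U_q^++C\le |U_q|+C$, hence
\[
(1-q)E(U_q)^2\;\le\;2(1-q)U_q^2+2C^2(1-q)\;=\;2(\kappa-\sqrt q\,Z)^2+2C^2(1-q)\;\le\;4\kappa^2+4Z^2+2C^2,
\]
which is integrable. Dominated convergence then yields $\lim_{q\uparrow 1}B(q)=\mathbb E[(\kappa-Z)_+^2]=:C_\kappa$.

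It remains to evaluate $C_\kappa$ in closed form. Writing $C_\kappa=\int_{-\infty}^{\kappa}(\kappa-z)^2\phi(z)\,dz$ and expanding the square, I will use the elementary identities $\int_{-\infty}^{\kappa}\phi(z)\,dz=\Phi(\kappa)$, $\int_{-\infty}^{\kappa}z\,\phi(z)\,dz=-\phi(\kappa)$, and $\int_{-\infty}^{\kappa}z^2\phi(z)\,dz=\Phi(\kappa)-\kappa\phi(\kappa)$ (the last via integration by parts against $z\phi(z)=-\phi'(z)$). Adding the pieces produces $(\kappa^2+1)\Phi(\kappa)+\kappa\phi(\kappa)$, as claimed. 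The only subtle step is the Mills ratio analysis on $\{Z<\kappa\}$, which is fully handled by Lemma~\ref{lem:Mills_bound-TOBECHECKED}; everything else is bookkeeping.
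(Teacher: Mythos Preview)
Your proof is correct and follows essentially the same route as the paper: identify the pointwise limit of $(1-q)E(U_q)^2$ by splitting on the sign of $\kappa-Z$ and using the Mills bounds from Lemma~\ref{lem:Mills_bound-TOBECHECKED}, dominate uniformly by an integrable function of $Z$, apply dominated convergence, and finish with the standard Gaussian integrals via $\phi'(z)=-z\phi(z)$. The only cosmetic differences are that you invoke the sharper two-sided bound $u<E(u)\le u+1/u$ on $\{Z<\kappa\}$ (the paper simply writes $E(u)\sim u$) and argue $E(U_q)\to 0$ on $\{Z>\kappa\}$ directly from $\phi(U_q)\to 0$, $\overline\Phi(U_q)\to 1$ rather than via $E(u)\le 2\phi(u)$ for $u<0$.
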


\begin{proof}
At $q=0$, it is trivial to see $B(0)=(1-0)E(\kappa)^2$. For the limit $q\uparrow 1$, set
\[
U_q:=\frac{\kappa-\sqrt{q}\,Z}{\sqrt{1-q}}.
\]
For each fixed $z\in\mathbb R$,
\[
(1-q)\,E\bigl(U_q(z)\bigr)^2\to (\kappa-z)_+^2
\qquad (q\uparrow 1),
\]
because $U_q(z)\to+\infty$ when $z<\kappa$, $U_q(z)\to-\infty$ when $z>\kappa$,  $E(u)\sim u$ as $u\to+\infty$ by Lemma~\ref{lem:Mills_bound-TOBECHECKED} and $E(u) \to 0$ as $u\to-\infty$ since $|E(u)| \leq 2\phi(u)$ for $u<0$.

Moreover, one may dominate the integrand as follows. By Lemma~\ref{lem:Mills_bound-TOBECHECKED}, there is a constant
$C>0$ such that $E(u)\le u_+ + C$ for all $u\in\mathbb{R}$. Hence, by $(a+b)^2\leq 2(a^2+b^2)$, for any
$q\in(0,1)$ and $z\in\mathbb{R}$,
\begin{align*}
 (1-q)E(U_q(z))^2&\le 2(1-q)(U_q(z))^2+2C^2(1-q)\\
&=2(\kappa-\sqrt{q}\,z)^2+2C^2(1-q)\\
&\le 2(\kappa+|z|)^2+2C^2.
\end{align*}
The right-hand side is integrable against $\phi(z)\,dz$, so dominated convergence yields
\[
\lim_{q\uparrow 1}B(q)=\int_{\mathbb R}(\kappa-z)_+^2\,\phi(z)\,dz=\mathbb E\big[(\kappa-Z)_+^2\big]=:C_{\kappa}.
\]
Finally,
\[
C_{\kappa}=\int_{-\infty}^{\kappa}(\kappa-z)^2\,\phi(z)\,dz
=\kappa^2\Phi(\kappa)-2\kappa\int_{-\infty}^{\kappa}z\phi(z)\,dz+\int_{-\infty}^{\kappa}z^2\phi(z)\,dz.
\]
Using $\phi'(z)=-z\phi(z)$ gives $\int_{-\infty}^{\kappa}z\phi(z)\,dz=-\phi(\kappa)$ and $\int_{-\infty}^{\kappa}z^2\phi(z)\,dz=\Phi(\kappa)-\kappa\phi(\kappa)$, hence
\[
C_{\kappa}=(\kappa^2+1)\Phi(\kappa)+\kappa\phi(\kappa).
\]
\end{proof}

Recall that $\alpha_c(\kappa)=2/(\pi C_\kappa)$; this is the value at which the limiting upper range $2/\pi$ of $A$ matches the limiting size $\alpha\,C_\kappa$ of the right-hand side in \eqref{eq:one-dimensional}.

\subsection{Reduction of Theorem~\ref{thm:main} to monotonicity of $B$} 
\begin{lemma}\label{lem:main_reduction-TOBECHECKED}
Assume that $B$ is strictly decreasing on $[0,1)$. Fix $\alpha>0$ and define
\[
f(r):=A(r)-\alpha\,B\bigl(P(r)\bigr),\qquad r\ge 0.
\]
Then $f$ is strictly increasing on $(0,\infty)$. In particular, the one-dimensional equation \eqref{eq:one-dimensional} has at most one solution $r\in[0,\infty)$, and the fixed point equation \eqref{eq:system} admits at most one solution $(q,r)\in[0,1)\times[0,\infty)$.
\end{lemma}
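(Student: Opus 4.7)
The plan is essentially to recognize $f$ as the sum of two strictly increasing functions on $(0,\infty)$, and then read off the uniqueness from monotonicity. First I would note that by Lemma~\ref{lem:A-TOBECHECKED}, $A$ is strictly increasing on $(0,\infty)$. By Lemma~\ref{lem:P_properties-TOBECHECKED}, $P$ is strictly increasing on $[0,\infty)$ with $P([0,\infty))\subseteq [0,1)$, and by assumption $B$ is strictly decreasing on $[0,1)$. Thus the composition $q\mapsto B(P(r))$ is strictly decreasing in $r$, and since $\alpha>0$, the term $-\alpha B(P(r))$ is strictly increasing in $r$. Therefore $f$ is strictly increasing on $(0,\infty)$ as the sum of a strictly increasing function and another strictly increasing function.

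Next I would transfer uniqueness from $r$ to $(q,r)$. Strict monotonicity of $f$ on $(0,\infty)$ implies $f$ has at most one zero in $(0,\infty)$. The point $r=0$ cannot be a solution either: we have $P(0)=0$, so $f(0)=A(0)-\alpha B(0)=-\alpha E(\kappa)^2<0$ because $E(\kappa)=\phi(\kappa)/\overline{\Phi}(\kappa)>0$ for every $\kappa\in\mathbb R$. Hence \eqref{eq:one-dimensional} has at most one solution in $[0,\infty)$. Finally, if $(q,r)\in [0,1)\times[0,\infty)$ solves \eqref{eq:system}, the first equation forces $q=P(r)$, and substituting into \eqref{eq:r_in_terms_of_B} reduces the system to \eqref{eq:one-dimensional} for $r$. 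Since the value of $r$ is unique and determines $q=P(r)$ uniquely, at most one $(q,r)$ solves \eqref{eq:system}.

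This argument has no real obstacle once the monotonicity of $B$ is in hand: it is a bookkeeping step whose sole purpose is to isolate the monotonicity of $B$ as the single nontrivial ingredient in the proof of Theorem~\ref{thm:main}. The genuinely hard part of the overall argument, deferred to Section~3, is exactly showing that $B$ is strictly decreasing on $[0,1)$, which is not at all obvious from the defining expression \eqref{eq:defB} because the prefactor $(1-q)$ decreases in $q$ while the Mills ratio factor $E((\kappa-\sqrt q Z)/\sqrt{1-q})^2$ can increase in $q$, so a careful cancellation argument is required.
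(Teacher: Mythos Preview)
Your proposal is correct and follows essentially the same route as the paper's proof: both argue that $A$ is strictly increasing (Lemma~\ref{lem:A-TOBECHECKED}), that $B\circ P$ is strictly decreasing (combining Lemma~\ref{lem:P_properties-TOBECHECKED} with the assumed monotonicity of $B$), compute $f(0)=-\alpha E(\kappa)^2<0$ to rule out $r=0$, and then pass uniqueness back to $(q,r)$ via $q=P(r)$. Your write-up is slightly more explicit about the reduction from \eqref{eq:system} to \eqref{eq:one-dimensional}, but the substance is identical.
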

\begin{proof}
By Lemma~\ref{lem:A-TOBECHECKED}, $A$ is strictly increasing on $(0,\infty)$. By Lemma~\ref{lem:P_properties-TOBECHECKED} and the assumed strict monotonicity of $B$, the composition $r\mapsto B(P(r))$ is strictly decreasing on $(0,\infty)$. Hence $f$ is strictly increasing on $(0,\infty)$.

Moreover, by Lemmas~\ref{lem:P_properties-TOBECHECKED},\,\ref{lem:A-TOBECHECKED}, and \ref{lem:B_endpoints-TOBECHECKED},
\[
f(0)=A(0)-\alpha B(P(0))=-\alpha B(0)=-\alpha E(\kappa)^2<0,
\]
so any zero of $f$ must lie in $(0,\infty)$. Since $f$ is strictly increasing on $(0,\infty)$, it has at most one zero. Therefore \eqref{eq:one-dimensional}, which is equivalent to $f(r)=0$, has at most one solution $r\in[0,\infty)$. The correspondence $q=P(r)$ then yields at most one solution $(q,r)$ to \eqref{eq:system}.
\end{proof}

Thus, the proof of Theorem~\ref{thm:main} reduces to the following monotonicity property of $B$.
\begin{theorem}[Monotonicity of $B$]\label{thm:B_monotone}
The function $q\mapsto B(q)$ is strictly decreasing on $[0,1)$.
\end{theorem}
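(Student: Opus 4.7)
The goal is $B'(q)<0$ for every $q\in(0,1)$; continuity of $B$ (Lemma~\ref{lem:B_cont0-TOBECHECKED}) then upgrades this to strict monotonicity on $[0,1)$. My approach is to differentiate $B$, use the algebraic connection between $\partial_q U_q$ and $\partial_Z U_q$ to convert the $q$-derivative into a $Z$-integration-by-parts, and then analyze the sign of the resulting expectation.

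\textbf{Step 1: Differentiate.}
A direct computation yields $\partial_q U_q = (U_q-\kappa\sqrt{1-q})/(2q(1-q))$. Combining this with the Mills-ratio identity $E'(u)=E(u)(E(u)-u)$, the chain rule and dominated convergence (justified by Lemma~\ref{lem:Mills_bound-TOBECHECKED}) give
\[
B'(q) = -\mathbb E[E(U_q)^2] + \frac{1}{q}\,\mathbb E\!\left[E(U_q)^2\bigl(E(U_q)-U_q\bigr)\bigl(U_q-\kappa\sqrt{1-q}\bigr)\right].
\]

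\textbf{Step 2: Collapse to a clean expectation.}
The key observation is the algebraic identity $(1-q)\,\partial_q[E(U_q)^2] = \frac{Z-\kappa\sqrt q}{2q}\,\partial_Z[E(U_q)^2]$, obtained by comparing $\partial_q U_q$ with $\partial_Z U_q = -\sqrt{q/(1-q)}$. Substituting this into $B'(q)$ and integrating by parts in $Z$ against $\phi(z)\,dz$ — boundary terms vanish because $E(u)\le u_++C$ (Lemma~\ref{lem:Mills_bound-TOBECHECKED}) grows at most linearly and is crushed by Gaussian decay — the complicated second term simplifies and I expect to arrive at
\[
B'(q) = \frac{1}{2q}\,\mathbb E\!\left[E(U_q)^2\bigl(Z^2-\kappa\sqrt q\,Z-(1+2q)\bigr)\right].
\]
One may cross-check this by expansion at $\kappa=0$, $q\downarrow 0$: the formula gives $B'(0)=-4/\pi+12/\pi^2<0$, consistent with direct Taylor expansion of $B$.

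\textbf{Step 3: Prove the expectation is negative.}
This is the main obstacle. Write $\rho(z):=z^2-\kappa\sqrt q\,z-(1+2q)$; note $\mathbb E[\rho(Z)]=-2q<0$, so the inequality is immediate for a constant weight, but the weight $E(U_q(Z))^2$ is concentrated on the left tail $Z\to-\infty$, where $U_q\to+\infty$ and $\rho(Z)\sim Z^2>0$. Hence a termwise argument fails, and one must quantify how the positive tail is dominated by the negative bulk. My plan is to apply Stein's identity again (using $\mathbb E[h(Z)(Z^2-1)]=\mathbb E[h''(Z)]$ and $\mathbb E[h(Z)Z]=\mathbb E[h'(Z)]$ with $h=E(U_q)^2$) to rewrite the expectation purely in terms of $E$, $U_q$, and $\kappa$, producing integrands of the form $E(U_q)^2\bigl[(E(U_q)-U_q)(3E(U_q)-2U_q)-1\bigr]$ and then exploit the sharp Mills bounds $u<E(u)\le u+1/u$ for $u>0$ and the super-exponential vanishing of $E(u)$ as $u\to-\infty$. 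The two-lemma structure hinted at in the paper (\texttt{g-decreasing-positive} and \texttt{g-uniform-negative}) suggests isolating an auxiliary function whose sign partitions either $[0,1)$ or the integration domain, so that one regime admits a direct monotonicity comparison and the other requires only a uniform quantitative estimate; I would try to mirror that split, as directly bounding the integrand pointwise appears infeasible.
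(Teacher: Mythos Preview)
Your Steps~1--2 are correct: the formula
\[
B'(q)=\frac{1}{2q}\,\mathbb E\!\left[E(U_q)^2\bigl(Z^2-\kappa\sqrt q\,Z-(1+2q)\bigr)\right]
\]
is a valid expression for $B'(q)$, obtained by a legitimate integration by parts. However, this representation is \emph{less} convenient than the paper's, because it keeps explicit $\kappa,q$ dependence in the integrand. The paper's route integrates by parts differently (it keeps the $U_t h'(U_t)$ term and IBPs only the $Zh'(U_t)$ term) and lands on
\[
B'(t)=\mathbb E[g(U_t)],\qquad g(u)=E(u)^2\bigl(3E(u)^2-4uE(u)+u^2-2\bigr),
\]
with $g$ a fixed function of one variable, independent of $(\kappa,q)$. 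Your proposed second round of Stein would give $\tfrac{1}{1-q}\mathbb E[(E')^2+EE'']+\tfrac{\kappa}{\sqrt{1-q}}\mathbb E[EE']-\mathbb E[E^2]$, which still carries $\kappa$; the integrand $E^2[(E-U)(3E-2U)-1]$ you write down equals $(E')^2+EE''-E^2$, not $g$, and is not the correct pointwise integrand for $B'$.

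The genuine gap is Step~3: everything substantive lives there, and you have only a sketch. The paper's argument consists of three concrete ingredients you have not found.
\begin{itemize}
\item[(i)] For $u\ge 0$: one shows $g'(u)=2E(u)^2H(u)$ with $H(u)=u^2d+6ud^2+6d^3-u-4d$ where $d=E(u)-u$. Writing $x=ud$, $y=d^2$, the constraints on the truncated-Gaussian moments of $Y_u=X-u\mid X\ge u$ (positivity of the $2\times 2$ and $3\times 3$ Hankel moment matrices, plus $\mathrm{Var}(Y_u)>0$) become explicit polynomial inequalities in $(x,y)$, and under them the quadratic $F(x,y)=x^2+6xy+6y^2-x-4y$ is shown to be negative. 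This Hankel-matrix idea is the crux for $u\ge 0$ and is absent from your plan.
\item[(ii)] For $u\le 0$: rather than a monotonicity argument, the paper parametrizes interior critical points of $g$ by $r=-u/d\in(0,1)$, solves $H=0$ for $d^2$, and reduces $g(u_\star)$ to the explicit rational function $r(4-r)(1-r)^2/(r^2-6r+6)^2$, which is then bounded by $1/18$.
\item[(iii)] The two regions are glued via $\mathbb P(U_t<0)\le \tfrac12$ (this is where $\kappa\ge 0$ enters) together with the numerical inequality $g(0)=-4(\pi-3)/\pi^2<-1/18$, which in turn rests on Archimedes' bound $\pi>223/71$.
\end{itemize}
Your Mills-ratio bounds $u<E(u)\le u+1/u$ are far too coarse to replace (i); they only give $g(u)=O(1)$ for large $u$, not the needed strict negativity. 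Without the Hankel/constraint reduction, the critical-point parametrization, and the precise $g(0)<-1/18$ balance, the proof does not close.
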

Section~\ref{section: monotone B} is devoted to the proof of Theorem~\ref{thm:B_monotone}.

\section{Monotonicity of $B$}\label{section: monotone B}
Recall that $Z\sim N(0,1)$ and for $u\in\mathbb{R}$,
\[
\phi(u) \coloneqq \frac{1}{\sqrt{2\pi}}e^{-u^{2}/2},\quad
\Phi(u)\coloneqq \int_{-\infty}^{u}\phi(s)\,ds,\quad
\overline{\Phi}(u)\coloneqq 1-\Phi(u),\quad 
E(u)\coloneqq \frac{\phi(u)}{\overline{\Phi}(u)},
\]
and for $q\in[0,1)$ define
\[
U_q\coloneqq \frac{\kappa-\sqrt{q}\,Z}{\sqrt{1-q}},\qquad
B(q)\coloneqq (1-q)\,\mathbb{E}\bigl[E(U_q)^2\bigr].
\]
We now prove Theorem~\ref{thm:B_monotone}.

\subsection{Preliminaries on $E$ and truncated Gaussians}

Let $X\sim N(0,1)$. For every $u\in\mathbb{R}$,
\[
\mathbb{P}(X\ge u)=\overline{\Phi}(u),\qquad
\mathbb{E}[X\mathbf{1}_{\{X\ge u\}}]=\phi(u),
\]
hence
\begin{equation}\label{eq:condmean}
E(u)=\frac{\phi(u)}{\overline{\Phi}(u)}=\mathbb{E}[X\mid X\ge u].
\end{equation}
Differentiating $E(u)=\phi(u)/\overline{\Phi}(u)$ and using $\phi'(u)=-u\phi(u)$ and
$\overline{\Phi}'(u)=-\phi(u)$ gives
\begin{equation}\label{eq:mprime}
E'(u)=E(u)^2-u\,E(u).
\end{equation}
Define also
\[
d(u)\coloneqq E(u)-u=\mathbb{E}[X-u\mid X\ge u].
\]
Let $\tilde{\mathbb{P}}(\,\cdot\,)\coloneqq \mathbb{P}(\,\cdot\,\mid X\ge u)$ and let $Y_u$ denote the random variable
\(
Y_u\coloneqq X-u
\)
under $\tilde{\mathbb{P}}$. Then
\(
d(u)=\tilde{\mathbb{E}}[Y_u].
\) 
Moreover, $Y_u\ge 0$ $\tilde{\mathbb{P}}$-a.s., and for every integer $k\ge 0$ its conditional moments are finite:
\[
\tilde{\mathbb{E}}\bigl[Y_u^k\bigr]
=\mathbb{E}\bigl[(X-u)^k\mid X\ge u\bigr]
=\frac{\mathbb{E}\bigl[(X-u)^k\mathbf{1}_{\{X\ge u\}}\bigr]}{\overline{\Phi}(u)}
\le \frac{\mathbb{E}\bigl[(|X|+|u|)^k\bigr]}{\overline{\Phi}(u)}<\infty,
\]
since $X\sim N(0,1)$ has moments of all orders and $\overline{\Phi}(u)>0$ for all $u\in\mathbb{R}$.

A useful identity is
\begin{equation}\label{eq:varidentity}
\begin{aligned}
1-E'(u)
&=1-\bigl(E(u)^2-uE(u)\bigr)\\
&=1+uE(u)-E(u)^2\\
&=\frac{\int_u^\infty x^2\phi(x)\,dx}{\overline\Phi(u)}
-\left(\frac{\int_u^\infty x\phi(x)\,dx}{\overline\Phi(u)}\right)^2\\
&=\mathbb{E}[X^{2}\mid X\ge u]-\bigl(\mathbb{E}[X\mid X\ge u]\bigr)^{2}
=\mathrm{Var}(X\mid X\ge u)>0.
\end{aligned}
\end{equation}
so $E'(u)<1$ for all $u$, and consequently
\begin{equation}\label{eq:dprime}
d'(u)=E'(u)-1=-\mathrm{Var}(X\mid X\ge u)<0.
\end{equation}
In particular, $d(u)>0$ and $d$ is strictly decreasing on $\mathbb{R}$.

\subsection{Derivative formula}
Fix $t\in(0,1)$ and let $Z\sim\mathcal N(0,1)$. Set
\begin{equation}\label{eq:U-def}
U_t\coloneqq \frac{\kappa-\sqrt{t}\,Z}{\sqrt{1-t}},
\end{equation}
so that $B(t)=(1-t)\,\mathbb E\bigl[E(U_t)^2\bigr].$ 
Let $h(u):=E(u)^2$. Then $B(t)=(1-t)\,\mathbb E[h(U_t)]$. We differentiate under the expectation,
\[
B'(t)=-\mathbb E[h(U_t)]+(1-t)\,\mathbb E\bigl[h'(U_t)\,\partial_t U_t\bigr].
\]
A direct computation gives
\[
\partial_t U_t=\frac{U_t}{2(1-t)}-\frac{Z}{2\sqrt t\,\sqrt{1-t}},
\]
hence
\begin{equation}\label{eq:Bprime_preIBP}
B'(t)=\mathbb E\Bigl[-h(U_t)+\frac12 U_t h'(U_t)\Bigr]
-\frac{\sqrt{1-t}}{2\sqrt t}\,\mathbb E\bigl[Z\,h'(U_t)\bigr].
\end{equation}

We now use Gaussian integration by parts: for smooth $\varphi$,
\[
\mathbb E[Z\varphi(Z)]=\mathbb E[\varphi'(Z)].
\]
Apply this with
\[
\varphi(z):=h'\!\left(\frac{\kappa-\sqrt t\,z}{\sqrt{1-t}}\right)=h'(U_t(z)).
\]
Since $\frac{d}{dz}U_t(z)=-\sqrt{\frac{t}{1-t}}$, we have
\[
\varphi'(z)=-\sqrt{\frac{t}{1-t}}\,h''(U_t(z)),
\]
and therefore
\[
\mathbb E\bigl[Z\,h'(U_t)\bigr]
=-\sqrt{\frac{t}{1-t}}\,\mathbb E\bigl[h''(U_t)\bigr].
\]
Substituting into \eqref{eq:Bprime_preIBP} yields
\[
B'(t)=\mathbb E\Bigl[-h(U_t)+\frac12 U_t h'(U_t)+\frac12 h''(U_t)\Bigr].
\]

Next,
\[
h'(u)=2E(u)E'(u),\qquad
h''(u)=2(E'(u))^2+2E(u)E''(u).
\]
Thus, we have
\[
B'(t)= \mathbb{E}\Big[-(E(U_t))^2+U_t E(U_t)E'(U_t)+(E'(U_t))^2+E(U_t)E''(U_t)\Big].
\]
Using the identity (recall $E'(u)=E(u)^2-u\,E(u)$ from \eqref{eq:mprime})
\[
E''(u)=2E(u)E'(u)-E(u)-uE'(u),
\]
we get
\[
uE(u)E'(u)+E(u)E''(u)-(E(u))^2=2E(u)^2\bigl(E'(u)-1\bigr).
\]
Hence
\begin{equation}\label{eq:Bprime-TOBECHECKED}
    B'(t)=\mathbb E\Bigl[(E'(U_t))^2+2E(U_t)^2\bigl(E'(U_t)-1\bigr)\Bigr]
=\mathbb E\bigl[g(U_t)\bigr],
\end{equation}
 where we define, for $u\in\mathbb{R}$,
\begin{equation}\label{eq:g-def}
g(u)\coloneqq E'(u)^2-2\bigl(1-E'(u)\bigr)E(u)^2.
\end{equation}

Therefore, to prove that $B$ is strictly decreasing on $(0,1)$, it suffices to show that
\begin{equation}\label{eq:goal}
\mathbb{E}\bigl[g(U_t)\bigr]<0\qquad\text{for all }t\in(0,1).
\end{equation}

\subsection{The function $g$ on $[0,\infty)$}
Recall that $E(u)=\frac{\phi(u)}{\overline{\Phi}(u)}=\mathbb{E}[X\mid X\ge u]$ with standard normal $X$. Using \eqref{eq:mprime}, i.e., $E'(u)=E(u)^2-u\,E(u)$, one can rewrite $g$ as the quartic form
\begin{equation}\label{eq:g-expanded}
g(u)=E(u)^2\bigl(3E(u)^2-4u\,E(u)+u^2-2\bigr).
\end{equation}
We will also use the first few conditional moments of $Y_u$: for $k\geq 0$, 
\[
\mu_k(u):= \tilde{\mathbb{E}}\bigl[(Y_u)^k\bigr]
=\mathbb{E}\bigl[(X-u)^k\mid X\ge u\bigr].
\]
Recall that $d(u)\coloneqq E(u)-u=\mu_1(u)$. A standard integration by parts recursion gives
\begin{align}
\mu_0(u)&=1,\quad 
\mu_1(u)=d(u),\label{eq:moments-TOBECHECKED}\\
\mu_2(u)&=1-u\,d(u),\nonumber\\
\mu_3(u)&=(u^2+2)\,d(u)-u,\nonumber\\
\mu_4(u)&=u^2+3-u(u^2+5)\,d(u).\nonumber
\end{align}

\begin{lemma}\label{lem:g-decreasing-positive}
The function $g$ is strictly decreasing on $[0,\infty)$. In particular, $g(u)\le g(0)$ for all $u\ge 0$.
Moreover,
\[
g(0)=\frac{12}{\pi^2}-\frac{4}{\pi}=-\frac{4(\pi-3)}{\pi^2}.
\]
\end{lemma}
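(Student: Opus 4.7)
The evaluation $g(0)=12/\pi^{2}-4/\pi$ is a direct substitution in the expanded form \eqref{eq:g-expanded}: with $E(0)=\phi(0)/\overline{\Phi}(0)=\sqrt{2/\pi}$ one has $g(0)=E(0)^{2}(3E(0)^{2}-2)=(2/\pi)(6/\pi-2)$. For the strict monotonicity on $[0,\infty)$, the plan is to compute $g'$ explicitly and show it is negative. Writing $g=E^{2}h$ with $h(u)=3E^{2}-4uE+u^{2}-2$, and using $E'=E\cdot d$ from \eqref{eq:mprime} together with $d(u)=E(u)-u$, a direct differentiation and simplification yields
\[
g'(u)\;=\;2\,E(u)^{2}\,K(u),\qquad K(u)\;:=\;6d^{3}+6ud^{2}+u^{2}d-4d-u.
\]
Since $E>0$, the task reduces to proving $K(u)<0$ on $[0,\infty)$. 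At $u=0$ this is immediate from $K(0)=2d(0)\bigl(3d(0)^{2}-2\bigr)=2\sqrt{2/\pi}(6/\pi-2)<0$, since $6<2\pi$.

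To handle general $u\ge 0$, I would recast the inequality through conditional moments of the shifted truncated Gaussian $Y_u=X-u$ under $\tilde{\mathbb{P}}(\,\cdot\,)=\mathbb{P}(\,\cdot\mid X\ge u)$. Using $\mu_{2}=1-ud$, $V(u):=1-E'(u)=\mu_{2}-d^{2}$, and $\mu_{3}=2d-u\mu_{2}$ from \eqref{eq:moments-TOBECHECKED} and \eqref{eq:varidentity}, and substituting $ud^{2}=d(1-\mu_{2})$, a short algebraic manipulation produces the clean identity
\[
-K(u)\;=\;6\,d(u)\,V(u)\,-\,\mu_{3}(u)\;=\;6\,\tilde{\mathbb{E}}[Y_u]\cdot\tilde{\mathbb{E}}[(Y_u-d)^{2}]\,-\,\tilde{\mathbb{E}}[Y_u^{3}].
\]
So the goal is the truncated-Gaussian moment inequality
\[
\tilde{\mathbb{E}}[Y_u^{3}]\;<\;6\,\tilde{\mathbb{E}}[Y_u]\cdot\tilde{\mathbb{E}}[(Y_u-\tilde{\mathbb{E}}[Y_u])^{2}],\qquad u\ge 0.
\]

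The main obstacle is establishing this moment inequality, which is asymptotically tight: both sides behave like $2/u^{3}$ as $u\to\infty$ (reflecting that $u\,Y_u$ converges weakly to $\mathrm{Exp}(1)$, for which the two sides match), and their difference is only of order $1/u^{5}$. Consequently, coarse moment estimates such as the Cauchy--Schwarz consequence $\mu_{1}\mu_{3}\ge\mu_{2}^{2}$ (equivalently $\mu_{2}\le 2d^{2}$, and thus $V\le d^{2}$) are not quite strong enough on their own to close the argument. My plan would be to combine them with the Stein recurrence $\mu_{k+1}=k\mu_{k-1}-u\mu_{k}$ (which expresses all $\mu_{k}$ as polynomials in $d$ and $u$) and the Mills bounds $u<E(u)\le u+1/u$ of Lemma~\ref{lem:Mills_bound-TOBECHECKED}, reducing the problem to a polynomial inequality in $(u,d)$ subject to these constraints; the positive saturation gap $2d^{2}-\mu_{2}\ge 0$ of the Cauchy--Schwarz bound should supply the residual needed to close the argument. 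If a single uniform estimate is elusive, a natural fallback is to split into a compact range $[0,u_{0}]$ (handled by direct bounds on $d$) and a tail $[u_{0},\infty)$ (handled by the asymptotic expansion $d(u)=1/u-2/u^{3}+O(1/u^{5})$ with explicit quantitative error).
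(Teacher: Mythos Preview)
Your setup coincides with the paper's: the factorization $g'(u)=2E(u)^2K(u)$ with $K(u)=6d^3+6ud^2+u^2d-4d-u$ is exactly the paper's $H$, your identity $-K=6dV-\mu_3$ is a clean equivalent reformulation, and the evaluation of $g(0)$ is correct. The gap is that $K(u)<0$ is never actually proved, and the constraints you propose are provably insufficient. In the variables $x:=ud$, $y:=d^2$ one has $d\cdot K=F(x,y):=x^2+6xy+6y^2-x-4y$; your Cauchy--Schwarz inequality $\mu_1\mu_3\ge\mu_2^2$ becomes $x+2y\ge 1$, variance positivity gives $x+y<1$, the Mills bound $d\le 1/u$ only yields $x\le 1$ (already implied by $x+y<1$), and $d\le d(0)$ gives $y\le 2/\pi$. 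But this region still contains points with $F>0$: for instance $(x,y)=(0.3,0.6)$ satisfies all four constraints yet $F(0.3,0.6)=0.63>0$. So no combination of the inequalities you name can force $F<0$, and the phrase ``saturation gap'' does not introduce any new constraint beyond $x+2y\ge 1$. Your compact/tail fallback is plausible for large $u$ via the expansion of $d$, but ``direct bounds on $d$'' on the compact part is exactly where the difficulty lies and is not carried out.

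What the paper adds is one further moment inequality: positive semidefiniteness of the $3\times 3$ Hankel matrix $\widetilde M_2=(\mu_{i+j})_{0\le i,j\le 2}$ gives $\det\widetilde M_2\ge 0$, which in the $(x,y)$ variables reads $x^2+xy-3x-3y+2\ge 0$. This extra constraint is violated at $(0.3,0.6)$, and together with $y<2/3$ it shrinks the feasible $x$-interval for each fixed $y$ to $[\max\{0,1-2y\},\,r_-(y)]$, where $r_-(y)$ is the smaller root of that quadratic; checking $F<0$ at the two endpoints is then an elementary one-variable computation (Lemma~\ref{lem:F negative-TOBECHECKED}). So your route is the paper's route up to the decisive step, but the higher-order Hankel constraint that closes it is missing from your plan.
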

Before going into the proof of the lemma above, we introduce a simple analytic lemma which will be proved in the appendix.
\begin{lemma}\label{lem:F negative-TOBECHECKED}
Let $(x,y)\in\mathbb R^2$ satisfy $x\ge 0$, $0<y<\frac{2}{3}$, and
\begin{equation}\label{eq:consFneg_lemma}
x+2y\ge 1,\qquad
x^2+xy-3x-3y+2\ge 0,\qquad
x+y<1.
\end{equation}
Then it holds that 
\begin{equation}\label{eq:FdefFneg_lemma-TOBECHECKED}
F(x,y):=x^2+6xy+6y^2-x-4y<0.
\end{equation}
\end{lemma}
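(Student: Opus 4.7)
The plan is to let $\Omega$ denote the open region defined by the hypotheses, treat the problem as constrained optimization of $F$ on the compact closure $\overline{\Omega}$, and show that $\max_{\overline{\Omega}} F = 0$ is attained only at two corner points, each of which is ruled out of $\Omega$ itself by a strict inequality.

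First I would check that $F$ has no interior critical point. Solving $\nabla F(x,y)=(2x+6y-1,\,6x+12y-4)=0$ gives the unique stationary point $(1,-1/6)$, which violates $y\ge 0$, so the maximum on $\overline{\Omega}$ lies on its boundary. The non-trivial boundary pieces of $\overline{\Omega}$ are the curve $\{G=0\}$ from the corner $(1,0)$ to $(0,2/3)$, the segment $\{x+2y=1\}$ from $(1,0)$ to $(0,1/2)$, and the segment $\{x=0\}$ from $(0,1/2)$ to $(0,2/3)$; the remaining constraints $x+y\le 1$, $y\le 2/3$, $y\ge 0$ are tight only at these three corners. On the two segments, direct substitution yields
\[
F(1-2y,y)=-2y^2 \quad (y\in[0,1/2]) \qquad \text{and} \qquad F(0,y)=2y(3y-2) \quad (y\in[1/2,2/3]),
\]
both non-positive with equality only at the corners.

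The crux is the curve $\{G(x,y)=0\}$. Here I would exploit the factorization
\[
G(x,y)=(x-1)(x-2)+y(x-3),
\]
which parameterizes the relevant branch as $y=y_*(x):=\frac{(1-x)(2-x)}{3-x}$ for $x\in[0,1]$. Substituting this into $F$ and clearing denominators using $(3-x)\,y_*(x)=(1-x)(2-x)$ and $(3-x)^2 y_*(x)^2=(1-x)^2(2-x)^2$, I would verify the polynomial collapse
\[
(3-x)^2\,F(x,y_*(x)) = x(x-1)^3.
\]
For $x\in(0,1)$ the right-hand side is strictly negative and it vanishes only at $x\in\{0,1\}$, i.e.\ at the corners $(0,2/3)$ and $(1,0)$.

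Combining the three boundary analyses, $\max_{\overline{\Omega}} F = 0$ is attained exactly at $(1,0)$ and $(0,2/3)$; both corners fail a strict hypothesis ($y>0$ and $y<2/3$ respectively), so $F<0$ throughout $\Omega$. The one nonroutine step is establishing the identity $(3-x)^2 F(x,y_*(x)) = x(x-1)^3$ on $\{G=0\}$; everything else is direct substitution or a one-line gradient check. The main conceptual obstacle is spotting the factorization $G=(x-1)(x-2)+y(x-3)$, which is what makes the curve explicitly parameterizable and causes the clean cancellation on $\{G=0\}$.
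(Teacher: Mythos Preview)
Your proof is correct and follows a different, somewhat slicker, route than the paper's. Both arguments ultimately reduce to checking $F<0$ on the three boundary pieces $\{x+2y=1\}$, $\{x=0\}$, and $\{G=0\}$, and the computations on the two linear segments are identical. The differences are: (i) you reach the boundary via a global critical-point argument (the unique stationary point $(1,-1/6)$ lies outside $\overline\Omega$), whereas the paper fixes $y$ and uses convexity of $F(\cdot,y)$ to reduce to the endpoints of the feasible $x$-interval $I_y=[\max\{0,1-2y\},r_-(y)]$; (ii) on the curve $\{G=0\}$ you parameterize by $x$, exploiting the factorization $G=(x-1)(x-2)+y(x-3)$ to make $y=y_*(x)$ rational and obtain the clean polynomial identity $(3-x)^2F(x,y_*(x))=x(x-1)^3$, whereas the paper parameterizes by $y$ via the quadratic formula $x=r_-(y)=\tfrac12\bigl(3-y-\sqrt{y^2+6y+1}\bigr)$, eliminates $x^2$ using $G=0$ to reduce $F$ to $5xy+2x+6y^2-y-2$, and then shows negativity by a squaring step culminating in $(5y+2)^2(y^2+6y+1)-(7y^2+11y+2)^2=8y^3(2-3y)>0$. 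Your parameterization avoids square roots entirely and yields a one-line sign check; the paper's slicing approach, on the other hand, sidesteps the need to verify the global geometry of $\partial\overline\Omega$ (your claim that $x+y\le 1$, $y\le 2/3$, $y\ge 0$ are tight only at the three corners is correct but deserves a line of justification in a full write-up).
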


\begin{proof}[Proof of Lemma~\ref{lem:g-decreasing-positive} assuming Lemma~\ref{lem:F negative-TOBECHECKED}]
We differentiate $g$ and rewrite it in a form that is convenient for moment inequalities. Set $E=E(u)$, $E'=E'(u)=E(u)^2-u\,E(u)$,
and $d=d(u)=E(u)-u$. A direct differentiation of \eqref{eq:g-expanded}, using \eqref{eq:mprime} and
$d'=E'-1$, yields the identity
\begin{equation}\label{eq:gprime}
g'(u)=2\,E(u)^2\,H(u),
\qquad
H(u)\coloneqq u^2 d + 6u d^2 + 6 d^3 - u - 4d .
\end{equation}
Since $E(u)>0$ for all $u$, the sign of $g'(u)$ is the sign of $H(u)$.

Fix $u\ge 0$. Let $X\sim N(0,1)$ and let $Y_u=X-u$ under the conditional law given $\{X\ge u\}$.
Then $Y_u\ge 0$ almost surely, and $\mu_k(u)=\tilde{\mathbb{E}}[Y_u^k]$ are given in \eqref{eq:moments-TOBECHECKED}.

For any polynomial $P$ we have $\tilde{\mathbb{E}}[P(Y_u)^2]\ge 0$, so the (unshifted) Hankel moment matrices
$\bigl(\mu_{i+j}(u)\bigr)_{i,j\ge 0}$ are positive semidefinite. Moreover, since $Y_u\ge 0$,
also $\tilde{\mathbb{E}}\!\bigl[Y_u\,P(Y_u)^2\bigr]\ge 0$, so the \emph{shifted} Hankel matrices
$\bigl(\mu_{i+j+1}(u)\bigr)_{i,j\ge 0}$ are positive semidefinite. In particular,
\[
M_1(u)\coloneqq
\begin{pmatrix}
\mu_1 & \mu_2\\
\mu_2 & \mu_3
\end{pmatrix}
\succeq 0,
\qquad
\widetilde M_2(u)\coloneqq
\begin{pmatrix}
\mu_0 & \mu_1 & \mu_2\\
\mu_1 & \mu_2 & \mu_3\\
\mu_2 & \mu_3 & \mu_4
\end{pmatrix}
\succeq 0,
\]
and $\mathrm{Var}(Y_u)=\mu_2-\mu_1^2>0$.

Hence $\det M_1(u)\ge 0$ and, using \eqref{eq:moments-TOBECHECKED},
\begin{equation}\label{eq:detM1-TOBECHECKED}
\det M_1(u)=\mu_1 \mu_3-\mu_2^2 = u d +2 d^2 -1 \ge 0.
\end{equation}
Similarly, $\det \widetilde M_2(u)\ge 0$ yields
\begin{equation}\label{eq:detM2-TOBECHECKED}
\begin{split}
    \det \widetilde M_2(u) &=\mu_0(\mu_2\mu_4-\mu_3^2)-\mu_1(\mu_1\mu_4-\mu_2\mu_3)+\mu_2(\mu_1\mu_3-\mu_2^2)\\
    &= u^2 d^2 + u d^3 -3 d^2 -3 u d +2 \ge 0.
    \end{split}
\end{equation}
Finally,
\begin{equation}\label{eq:varY-TOBECHECKED}
\mathrm{Var}(Y_u)=\mu_2-\mu_1^2=1-u d-d^2>0,
\end{equation}
so $u d + d^2 < 1$.

Introduce the variables
\[
x\coloneqq u d,\qquad y\coloneqq d^2.
\]
Since $d=\mathbb{E}[X-u\mid X\ge u]>0$, for $u\ge 0$ one has $x\ge 0$ and $y>0$. The inequalities \eqref{eq:detM1-TOBECHECKED}, \eqref{eq:detM2-TOBECHECKED},
\eqref{eq:varY-TOBECHECKED} become
\begin{equation}\label{eq:constraints-TOBECHECKED}
x+2y\ge 1,\qquad
x^2+xy-3x-3y+2\ge 0,\qquad
x+y<1.
\end{equation}
Also, $u\mapsto d(u)$ is decreasing by \eqref{eq:dprime}, so $d(u)\le d(0)=E(0)=\sqrt{2/\pi}$ and hence
\begin{equation}\label{eq:ybound-TOBECHECKED}
0<y\le \frac{2}{\pi}<\frac{2}{3}.
\end{equation}

Now rewrite $H(u)$ in terms of $(x,y)$: 
\begin{equation}\label{eq:Hxy-TOBECHECKED}
 F(x,y):= d\,H(u)=u^2 d^2 + 6u d^3 + 6 d^4 - ud - 4d^2= x^2+6xy+6y^2-x-4y.
\end{equation}
By Lemma~\ref{lem:F negative-TOBECHECKED} with $d>0$, we have $H(u)<0$ for all $u\ge 0$. By \eqref{eq:gprime},
$g'(u)<0$ on $[0,\infty)$, so $g$ is strictly decreasing there.

Finally, at $u=0$ one has $E(0)=\phi(0)/\overline{\Phi}(0)=\sqrt{2/\pi}$ and by \eqref{eq:mprime}
$E'(0)=E(0)^2=2/\pi$. Plugging into \eqref{eq:g-def} gives
\begin{equation}\label{eq: g-zero-value-TOBECHECKED}
    g(0)=\left(\frac{2}{\pi}\right)^2-2\left(1-\frac{2}{\pi}\right)\left(\frac{2}{\pi}\right)
=\frac{12}{\pi^2}-\frac{4}{\pi}
=-\frac{4(\pi-3)}{\pi^2}.
\end{equation}

\end{proof}

\subsection{$g$ on $(-\infty,0]$ via critical points}

\begin{lemma}\label{lem:g-uniform-negative-TOBECHECKED}

For all $u\le 0$ one has
\[
g(u)\le \frac{1}{18}.
\]
\end{lemma}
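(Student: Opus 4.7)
The plan is to push through the ``critical points'' strategy suggested by the subsection title. In the coordinates $t=E'(u)=md$, $y=d(u)^2$, with $m=E(u)$ and $d=E(u)-u$, the identity $md=E(u)(E(u)-u)=E'(u)=t$ forces $m^2=t^2/y$, so \eqref{eq:g-def} rearranges to the global identity
\[
g(u)=t^2-2(1-t)m^2=\frac{t^2(y+2t-2)}{y}.
\]
First I handle the boundary. By \eqref{eq: g-zero-value-TOBECHECKED}, $g(0)=-4(\pi-3)/\pi^2<0$; and as $u\to-\infty$, $m=\phi(u)/\overline{\Phi}(u)\to 0$ with $um\to 0$ by standard Gaussian tail estimates, so $t=m^2-um\to 0$ and $(y+2t-2)/y\to 1$, hence $g(u)\to 0$. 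Therefore, any $u\in(-\infty,0]$ at which $g(u)>1/18$ must be an interior critical point $u_*\in(-\infty,0)$ with $g'(u_*)=0$.

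At such $u_*$, the formulas $g'=2E^{2}H$ and $F=dH$ from the proof of Lemma~\ref{lem:g-decreasing-positive} give $F(x_*,y_*)=0$. Using $x+y=md=t$, the equation $F(t-y,y)=0$ simplifies to
\[
y^{2}+(4t-3)y+(t^{2}-t)=0,
\]
a quadratic in $y$ with roots $y_\pm(t)=\tfrac12\bigl(3-4t\pm\delta(t)\bigr)$, where $\delta(t):=\sqrt{12t^{2}-20t+9}$. Since $\delta^{2}-(3-4t)^{2}=4t(1-t)>0$ on $(0,1)$, we have $y_-<0$, so only $y_+$ is admissible and $2y=3-4t+\delta$. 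Substituting this into the displayed identity gives $y+2t-2=(\delta-1)/2$, and therefore
\[
g(u_*)=\frac{t_*^{2}(\delta-1)}{3-4t_*+\delta}=t_*\bigl(3-3t_*-\delta(t_*)\bigr)=:g_c(t_*),
\]
where the second equality is verified by expanding $(3-3t-\delta)(3-4t+\delta)=t(\delta-1)$ using $\delta^{2}=12t^{2}-20t+9$.

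It then remains to maximize $g_c$ over $t\in(0,1)$, which is the range of $E'$ by the variance identity \eqref{eq:varidentity}. Using $\delta\delta'=12t-10$, a short computation gives
\[
\delta\,g_c'(t)=3(1-2t)\bigl[\delta+(4t-3)\bigr],
\]
so interior stationary points satisfy $t=\tfrac12$ or $\delta=3-4t$. The latter, combined with $\delta^{2}=12t^{2}-20t+9$, yields $4t(1-t)=0$, i.e.\ $t\in\{0,1\}$, and since $g_c(0)=0$, $g_c(1)=-1$, the unique interior stationary point is $t=\tfrac12$, at which
\[
g_c\!\left(\tfrac12\right)=\tfrac12\bigl(\tfrac32-\sqrt{2}\bigr)=\frac{3-2\sqrt{2}}{4}.
\]

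Finally, $\tfrac{3-2\sqrt{2}}{4}<\tfrac{1}{18}$ is equivalent to $25<18\sqrt{2}$, i.e.\ $625<648$. Combined with the boundary analysis this yields $g(u)\le \tfrac{3-2\sqrt{2}}{4}<\tfrac{1}{18}$ on $(-\infty,0]$. The main technical obstacle is the algebraic reduction at critical points to the clean one-variable form $g_c(t)=t(3-3t-\delta(t))$; once that reduction is in hand, the resulting optimization is an elementary calculus exercise.
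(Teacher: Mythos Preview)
Your proof is correct and follows the same overall critical-point strategy as the paper, but the execution differs in a useful way. The paper parametrizes an interior critical point $u_*<0$ by $r_*=-u_*/d_*\in(0,1)$, solves $H(u_*)=0$ for $d_*^2$ in terms of $r_*$, and obtains
\[
g(u_*)=\frac{r_*(4-r_*)(1-r_*)^2}{(r_*^2-6r_*+6)^2},
\]
which is then shown to be at most $1/18$ by a somewhat laborious quartic polynomial inequality (Lemma~\ref{lem: rational function bound-TOBECHECKED}). You instead parametrize by $t=E'(u_*)\in(0,1)$, use the relation $x+y=t$ to eliminate $x$ from $F=0$, and arrive at the closed form $g(u_*)=g_c(t_*)=t_*(3-3t_*-\delta(t_*))$. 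Your calculus optimization then gives the \emph{exact} supremum $(3-2\sqrt2)/4\approx0.0429$, which is strictly smaller than $1/18\approx0.0556$. The two parametrizations are linked by $t=(1-r)(4-r)/(r^2-6r+6)$, and your maximizer $t=\tfrac12$ corresponds to $r=2-\sqrt2$ in the paper's variable, where one can check the paper's rational function also evaluates to $(3-2\sqrt2)/4$. Thus your route replaces the polynomial inequality of Lemma~\ref{lem: rational function bound-TOBECHECKED} by a short sign analysis of $g_c'$, and as a byproduct sharpens the constant. One minor remark: the phrase ``the range of $E'$'' is not quite what you use---you only need $t_*\in(0,1)$, which follows from $E'>0$ and the variance identity \eqref{eq:varidentity}; whether $E'$ is onto $(0,1)$ is irrelevant since you are taking an upper bound.
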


\begin{proof}
First note that $g$ is continuous on $\mathbb{R}$ (because $E$ is smooth on $\mathbb{R}$),
and $g(u)\to 0$ as $u\to -\infty$ since $E(u)=\phi(u)/\overline{\Phi}(u)\le 2\phi(u)$ for $u\le 0$
and $\phi(u)\to 0$ as $u\to-\infty$. Moreover, $|u|\phi(u)\to 0$, so $|u|E(u)\le 2|u|\phi(u)\to 0$ and
\eqref{eq:mprime} gives $E'(u)=E(u)^2-uE(u)\to 0$, hence $g(u)\to 0$.
Also, $g(0)<0$ by Lemma \ref{lem:g-decreasing-positive}.

If $\sup_{u\le 0}g(u)=0$, then $g(u)\le 0\le 1/18$ for all $u\le 0$ and there is nothing to prove.
Otherwise, $\sup_{u\le 0}g(u)>0$. Since $g(0)<0$, this supremum cannot be attained at $u=0$ and therefore
must be attained at some interior maximizer $u_\star<0$.

At such an interior maximizer, $g'(u_\star)=0$. By \eqref{eq:gprime}, this implies $H(u_\star)=0$,
where $H$ is as in \eqref{eq:gprime}. Write  $d_\star:=d(u_\star)=E(u_\star)-u_\star$, and define
\[
r_\star\coloneqq -\frac{u_\star}{d_\star}=-\frac{u_\star}{E(u_\star)-u_\star}.
\]
Since $u_\star<0$ and $d_\star>0$, one has $r_\star>0$. Moreover, 
$$0<\phi(u_\star)/\overline{\Phi}(u_\star)=E(u_\star)=u_\star+d_\star=d_\star(1-r_\star),$$ so $r_\star<1$.
Hence $r_\star\in(0,1)$.

The equation $H(u_\star)=0$ becomes, after substituting $u_\star=-r_\star d_\star$,
\[
u_\star^2 d_\star + 6u_\star d_\star^2 + 6 d_\star^3 -4d_\star -u_\star
= (r_\star^2-6r_\star+6)d_\star^3 + (r_\star-4)d_\star = 0.
\]
Dividing by $d_\star>0$ gives
\[
(r_\star^2-6r_\star+6)d_\star^2 + (r_\star-4)=0,
\qquad\text{so}\qquad
d_\star^2=\frac{4-r_\star}{r_\star^2-6r_\star+6}.
\]
Now evaluate $g(u_\star)$ using \eqref{eq:g-expanded}, i.e., $g(u_\star)=E(u_\star)^2\bigl(3E(u_\star)^2-4u_\star\,E(u_\star)+u_\star^2-2\bigr)
$. With $E(u_\star)=u_\star+d_\star=d_\star(1-r_\star)$ and
$u_\star d_\star=-r_\star d_\star^2$, one has
\[
g(u_\star)=d_\star^2(1-r_\star)^2\bigl((-2r_\star+3)d_\star^2-2\bigr).
\]
Substituting $d_\star^2=(4-r_\star)/(r_\star^2-6r_\star+6)$, a cancellation occurs:
\[
((-2r_\star+3)d_\star^2-2)
=\frac{(3-2r_\star)(4-r_\star)-2(r_\star^2-6r_\star+6)}{r_\star^2-6r_\star+6}
=\frac{r_\star}{r_\star^2-6r_\star+6}.
\]
Therefore the value of $g$ at any critical point $u=u_\star<0$ is
\begin{equation}\label{eq:g-critical-r-TOBECHECKED}
g(u_\star)=\frac{r_\star(4-r_\star)(1-r_\star)^2}{(r_\star^2-6r_\star+6)^2},
\qquad r_\star\in(0,1).
\end{equation}

It remains to bound the right-hand side:
\[
\frac{r(4-r)(1-r)^2}{(r^2-6r+6)^2}\le \frac{1}{18},
\qquad r\in(0,1).
\]
Since this is a straightforward estimate, we defer the proof to the appendix; see Lemma~\ref{lem: rational function bound-TOBECHECKED}.

Combining this with \eqref{eq:g-critical-r-TOBECHECKED},  every interior maximizer $u_\star<0$
satisfies $g(u_\star)\le 1/18$. Since $g(u)\le \max\{0,g(u_\star)\}$ for all $u\le 0$ and $0\le 1/18$,
it follows that $g(u)\le 1/18$ for all $u\le 0$.
\end{proof}

\subsection{Completion of the proof: monotonicity of $B$}

\begin{proof}[Proof of Theorem~\ref{thm:B_monotone}]
Fix $t\in(0,1)$ and write $U_t=\frac{\kappa-\sqrt{t}\,Z}{\sqrt{1-t}}$ as in \eqref{eq:U-def}. Since  $\kappa/\sqrt{t}\geq 0$,
\[
\mathbb{P}(U_t<0)=\mathbb{P}\!\left(\frac{\kappa-\sqrt{t}\,Z}{\sqrt{1-t}}<0\right)
=\mathbb{P}(\sqrt{t}\,Z>\kappa)
=\overline{\Phi}\!\left(\frac{\kappa}{\sqrt{t}}\right)
\leq \frac{1}{2}.
\]

Split the expectation in \eqref{eq:Bprime-TOBECHECKED} according to the sign of $U_t$.
By Lemma \ref{lem:g-decreasing-positive}, for $U_t\ge 0$ one has $g(U_t)\le g(0)$.
By Lemma \ref{lem:g-uniform-negative-TOBECHECKED}, for $U_t<0$ one has $g(U_t)\le 1/18$.
Therefore, with $p_t\coloneqq \mathbb{P}(U_t<0)\in(0,1/2]$,
\begin{equation}\label{eq:split}
\mathbb{E}[g(U_t)]
\le g(0)\,(1-p_t) + \frac{1}{18}\,p_t.
\end{equation}

We now show $g(0)< -1/18$, so that the right-hand side of \eqref{eq:split} is strictly negative.
From Lemma \ref{lem:g-decreasing-positive}, we have $g(0)=-{4(\pi-3)}/{\pi^2}.$ Hence, it suffices to show that 
\begin{equation}\label{eq: pi-estimate-TOBECHECKED}
    -\frac{4(\pi-3)}{\pi^2}<-\frac{1}{18}.
\end{equation}

It is classical (Archimedes' bounds) that $\pi>\frac{223}{71}$. The function
\[
h(x)\coloneqq \frac{4(x-3)}{x^2}
\]
is strictly increasing for $x\in(3,6)$ because $h'(x)=\frac{4(6-x)}{x^3}>0$ there.
Since $\pi\in(3,6)$, we have
\[
\frac{4(\pi-3)}{\pi^2}=h(\pi)\ge h\!\left(\frac{223}{71}\right)
=\frac{4\bigl(\frac{223}{71}-3\bigr)}{\bigl(\frac{223}{71}\bigr)^2}
=\frac{201640}{3530759}
>\frac{1}{18}.
\]

Insert this into \eqref{eq:split}:
\[
\mathbb{E}[g(U_t)]
< g(0)\,(1-p_t) - g(0)\,p_t
= g(0)\,(1-2p_t)
\leq 0,
\]
since $g(0)<0$ and $p_t\leq 1/2$. By \eqref{eq:Bprime-TOBECHECKED}, $B'(t)=\mathbb{E}[g(U_t)]<0$ for all $t\in(0,1)$.

Therefore $B$ is strictly decreasing on $(0,1)$. Continuity of $B$ on $[0,1)$ follows directly
from the definition and dominated convergence, so $B$ is strictly decreasing on $[0,1)$ as claimed.
\end{proof}

\section{Proof of Theorem~\ref{thm:main},\,\ref{thm:2ndmain},\,\ref{thm: bound for threshold}}
\begin{proof}[Proof of Theorem \ref{thm:main}]
By Theorem~\ref{thm:B_monotone}, $B$ is strictly decreasing on $[0,1)$. Fix $\alpha>0$ and define $f(r)=A(r)-\alpha\,B\bigl(P(r)\bigr)$ as in Lemma~\ref{lem:main_reduction-TOBECHECKED}. Then $f$ is strictly increasing on $(0,\infty)$.

Moreover, by Lemmas~\ref{lem:A-TOBECHECKED}, \ref{lem:P_properties-TOBECHECKED}, and \ref{lem:B_cont0-TOBECHECKED}, the function $f$ is continuous on $[0,\infty)$. By Lemmas~\ref{lem:A-TOBECHECKED}, \ref{lem:P_properties-TOBECHECKED}, and \ref{lem:B_endpoints-TOBECHECKED},
\[
f(0)=A(0)-\alpha B(P(0))=-\alpha B(0)<0.
\]
Also, using Lemmas~\ref{lem:A-TOBECHECKED}, \ref{lem:P_properties-TOBECHECKED}, and \ref{lem:B_endpoints-TOBECHECKED},
\[
\lim_{r\to\infty}f(r)=\lim_{r\to\infty}A(r)-\alpha\lim_{r\to\infty}B(P(r))
=\frac{2}{\pi}-\alpha C_{\kappa}.
\]

If $\alpha<\alpha_c(\kappa)=\frac{2}{\pi C_{\kappa}}$, then $\lim_{r\to\infty}f(r)>0$, so by continuity there exists $r_*\in(0,\infty)$ with $f(r_*)=0$. Since $f$ is strictly increasing on $(0,\infty)$, this $r_*$ is unique. Using $R_\kappa(q,\alpha)=\alpha\,\mathbb E[F_q(\sqrt q Z)^2]=\alpha B(q)/(1-q)^2$, we get $r_*=R_\kappa(q_*,\alpha)$.
 Setting $q_*:=P(r_*)<1$, the identity $f(r_*)=0$ is equivalent to
\(
r_*=\alpha B(q_*)/(1-q_*)^2,
\)
so $(q_*,r_*)$ solves \eqref{eq:system}.

If $\alpha\ge \alpha_c(\kappa)$, then $\lim_{r\to\infty}f(r)\le 0$. Since $f(0)<0$ and $f$ is strictly increasing on $(0,\infty)$, it follows that $f(r)<0$ for all $r\ge 0$. Any solution $(q,r)$ of \eqref{eq:system} would satisfy $f(r)=0$ (since $q=P(r)$ and $r=R_\kappa(q,\alpha)=\alpha B(q)/(1-q)^2$), contradicting $f(r)<0$. Hence \eqref{eq:system} has no solution.

\end{proof}

\begin{proof}[{Proof of Theorem~\ref{thm:2ndmain}}]

Fix $\kappa\geq 0$. For each $\alpha\in(0,\alpha_c(\kappa))$, let $(q_\alpha,r_\alpha)$ be the unique
solution to \eqref{eq:system}. It suffices to show that for any sequence
$\alpha_n\uparrow\alpha_c(\kappa)$,
\[
r_{\alpha_n}\to\infty.
\]
Indeed, if $r_{\alpha_n}\to\infty$ then $q_{\alpha_n}=P(r_{\alpha_n})\to 1$ by Lemma
\ref{lem:P_properties-TOBECHECKED}. Since the choice of $\alpha_n$ is arbitrary, this yields
\[
r_\alpha\to\infty,\qquad q_\alpha\to 1\qquad\text{as }\alpha\uparrow\alpha_c(\kappa).
\]

Let $\alpha_n\uparrow \alpha_c(\kappa)$. Set $(q_n,r_n):=(q_{\alpha_n},r_{\alpha_n})$, so
\[
q_n=P(r_n),\qquad r_n=\frac{\alpha_n}{(1-q_n)^2}\,B(q_n).
\]

We will prove that $r_n\to\infty$. To this end, assume the contrary. Then there exists a subsequence, still denoted $(r_n)$, that is bounded.
By compactness, after passing to a further subsequence we may assume $r_n\to r_*\in[0,\infty)$.
By Lemma~\ref{lem:P_properties-TOBECHECKED} and $q_n=P(r_n)$,
\[
q_n\to q_*:=P(r_*).
\]
Since $r_*<\infty$, Lemma~\ref{lem:P_properties-TOBECHECKED} gives $q_*=P(r_*)<1$.

Now use the second equation and take limits. Because $q_n\to q_*<1$, we have
$(1-q_n)^{-2}\to(1-q_*)^{-2}$, and by Lemma~\ref{lem:B_cont0-TOBECHECKED},
$B(q_n)\to B(q_*)$. Also, $\alpha_n\to\alpha_c(\kappa)$. Therefore
\[
r_*=\lim_{n\to\infty} r_n
=\lim_{n\to\infty}\frac{\alpha_n}{(1-q_n)^2}\,B(q_n)
=\frac{\alpha_c(\kappa)}{(1-q_*)^2}\,B(q_*).
\]
Together with $q_*=P(r_*)$, this shows that $(q_*,r_*)$ solves \eqref{eq:system} at
$\alpha=\alpha_c(\kappa)$. This contradicts Theorem~\ref{thm:main}, which states that a solution
exists if and only if $\alpha<\alpha_c(\kappa)$. The contradiction proves $r_n\to\infty$.
\end{proof}
\begin{proof}[Proof of Theorem~\ref{thm: bound for threshold}]
Let $\alpha_n\uparrow \alpha_c(\kappa)$ and set $(q_n,r_n):=(q_{\alpha_n},r_{\alpha_n})$.
Write $\varepsilon_n:=1-q_n$. By Theorem~\ref{thm:2ndmain}, $\varepsilon_n\to 0$ and $r_n\to\infty$.
Recall also from Theorem~\ref{thm:main} that $(q_n,r_n)$ satisfies the fixed point relations
\[
q_n=\mathbb E\big[\tanh^2(\sqrt{r_n}\,Z)\big],
\qquad
r_n=\frac{\alpha_n}{\varepsilon_n^2}\,B(q_n).
\]
Define
\[
U_n:=\frac{\kappa-\sqrt{q_n}\,Z}{\sqrt{\varepsilon_n}},
\qquad
A_n:=\mathbb E\big[(\kappa-\sqrt{q_n}\,Z)_+^2\big].
\]
Then
\[
\mathrm{RS}_\star(\alpha_n,\kappa)
=
\log 2 -\frac{r_n\varepsilon_n}{2}
+\mathbb E\big[\log\cosh(\sqrt{r_n}\,Z)\big]
+\alpha_n\,\mathbb E\!\left[\log \overline{\Phi}(U_n)\right].
\]

For every $x\in\mathbb R$ one has $\log\cosh x\le x\tanh x$. Taking $x=\sqrt{r_n}\,Z$ and using Gaussian
integration by parts,
\begin{equation}\label{eq: cosh_sech_comparison-TOBECHECKED}
    \mathbb E\big[\log\cosh(\sqrt{r_n}\,Z)\big]
\le
\sqrt{r_n}\,\mathbb E\big[Z\tanh(\sqrt{r_n}\,Z)\big]
=
r_n\,\mathbb E\big[\mathrm{sech}^2(\sqrt{r_n}\,Z)\big].
\end{equation}
Since $\mathrm{sech}^2=1-\tanh^2$ and $q_n=\mathbb E[\tanh^2(\sqrt{r_n}Z)]$, this gives
$\mathbb E[\mathrm{sech}^2(\sqrt{r_n}Z)]=1-q_n=\varepsilon_n$, hence
\[
-\frac{r_n\varepsilon_n}{2}+\mathbb E\big[\log\cosh(\sqrt{r_n}\,Z)\big]
\le \frac{r_n\varepsilon_n}{2}.
\]
Using $r_n=\frac{\alpha_n}{\varepsilon_n^2}B(q_n)$ yields
\begin{equation}\label{eq:spin_bd_unif_nostep_revised-TOBECHECKED}
-\frac{r_n\varepsilon_n}{2}+\mathbb E\big[\log\cosh(\sqrt{r_n}\,Z)\big]
\le
\frac{\alpha_n}{2\varepsilon_n}\,B(q_n).
\end{equation}

Fix $\delta\in(0,1)$. Since $q_n\to 1$, for all large $n$ we have $\sqrt{q_n}\ge 1/2$. On the event
$\{Z\le \kappa-\delta\}$,
\[
\kappa-\sqrt{q_n}Z\ge \kappa-\sqrt{q_n}(\kappa-\delta)
=\kappa(1-\sqrt{q_n})+\sqrt{q_n}\delta\ge \delta/2,
\]
so $U_n\ge \delta/(2\sqrt{\varepsilon_n})>0$ on $\{Z\le \kappa-\delta\}$.

For $u>0$, Chernoff's bound gives $$\overline{\Phi}(u)=\mathbb P(Z\geq u)\leq e^{-u^2}\mathbb E[e^{u Z}]= e^{-u^2/2}.$$ By Lemma~\ref{lem:Mills_bound-TOBECHECKED}, we have 
\begin{equation*}
 \begin{split}
 \overline{\Phi}(u)\le \phi(u)/u\le u^{-1}e^{-u^2/2}.
\end{split}
\end{equation*}
 Hence
\[
\log\overline{\Phi}(u)\le -\frac{u^2}{2},
\qquad
\log\overline{\Phi}(u)\le -\frac{u^2}{2}-\log u,
\qquad (u>0),
\]
and trivially $\log\overline{\Phi}(u)\le 0$ for all $u\in\mathbb R$.
Therefore, for all large $n$,
\[
\log\overline{\Phi}(U_n)
\le
-\frac{U_n^2}{2}\,\mathbf{1}_{\{U_n>0\}}
-(\log U_n)\,\mathbf{1}_{\{Z\le \kappa-\delta\}}.
\]
Taking expectations and using $U_n^2\,\mathbf{1}_{\{U_n>0\}}=\varepsilon_n^{-1}(\kappa-\sqrt{q_n}Z)_+^2$,
\[
\mathbb E\big[\log\overline{\Phi}(U_n)\big]
\le
-\frac{A_n}{2\varepsilon_n}
-\mathbb E\big[(\log U_n)\,\mathbf{1}_{\{Z\le \kappa-\delta\}}\big].
\]
On $\{Z\le \kappa-\delta\}$ we have $\log U_n=\log(\kappa-\sqrt{q_n}Z)-\frac12\log\varepsilon_n$, hence
\[
-\mathbb E\big[(\log U_n)\,\mathbf{1}_{\{Z\le \kappa-\delta\}}\big]
=
\frac{\Phi(\kappa-\delta)}{2}\log\varepsilon_n
-\mathbb E\big[\log(\kappa-\sqrt{q_n}Z)\,\mathbf{1}_{\{Z\le \kappa-\delta\}}\big].
\]
Since on $\{Z\le \kappa-\delta\}$ and for large $n$ we have $\kappa-\sqrt{q_n}Z\ge \delta/2$, it follows that
$\log(\kappa-\sqrt{q_n}Z)\ge \log(\delta/2)$ and therefore
\[
-\mathbb E\big[\log(\kappa-\sqrt{q_n}Z)\,\mathbf{1}_{\{Z\le \kappa-\delta\}}\big]\le -\log(\delta/2).
\]
Consequently, with $C(\delta):=-\log(\delta/2)\in(0,\infty)$,
\begin{equation}\label{eq:constraint_bd_unif_nostep_revised-TOBECHECKED}
\mathbb E\big[\log\overline{\Phi}(U_n)\big]
\le
-\frac{A_n}{2\varepsilon_n}
+\frac{\Phi(\kappa-\delta)}{2}\log\varepsilon_n
+C(\delta),
\end{equation}
for all large $n$.

Next, let $E(u):=\phi(u)/\overline{\Phi}(u)$ and recall 
\[
B(q_n)=\varepsilon_n\,\mathbb E\big[E(U_n)^2\big],
\qquad
A_n=\varepsilon_n\,\mathbb E\big[(U_n)_+^2\big].
\]
For $u>0$,  Lemma~\ref{lem:Mills_bound-TOBECHECKED}, $\overline{\Phi}(u)\le \phi(u)/u$, implies $E(u)\ge u$, hence
$E(u)^2\ge u^2=(u_+)^2$, while for $u\le 0$ one has $(u_+)^2=0\le E(u)^2$. Thus
$E(u)^2\ge (u_+)^2$ for all $u\in\mathbb R$, and so $B(q_n)\ge A_n$.

Moreover, there is a finite constant $C_0$ such that
\[
0\le E(u)^2-(u_+)^2\le C_0\qquad\text{for all }u\in\mathbb R.
\]
Indeed, if $u\le 0$, then $\overline{\Phi}(u)\ge 1/2$ and $\phi(u)\le \phi(0)$, so
$E(u)\le 2\phi(0)=\sqrt{2/\pi}$ and $E(u)^2-(u_+)^2=E(u)^2\le 2/\pi$.
If $u\ge 1$, the two-sided Mills bound $\overline{\Phi}(u)\ge \phi(u)\,u/(1+u^2)$ by Lemma~\ref{lem:Mills_bound-TOBECHECKED} gives
$E(u)\le (1+u^2)/u=u+1/u$, hence
\[
0\le E(u)^2-u^2\le (u+1/u)^2-u^2=2+1/u^2\le 3.
\]
Finally, on $u\in[0,1]$ the function $u\mapsto E(u)^2-u^2$ is continuous and hence bounded.
Taking $C_0$ as the maximum of these bounds yields the claim.

Therefore,
\begin{equation}\label{eq:BA_bd_unif_nostep_revised-TOBECHECKED}
0\le B(q_n)-A_n
=
\varepsilon_n\,\mathbb E\big[E(U_n)^2-(U_n)_+^2\big]
\le C_0\,\varepsilon_n.
\end{equation}

Combining \eqref{eq:spin_bd_unif_nostep_revised-TOBECHECKED},
\eqref{eq:constraint_bd_unif_nostep_revised-TOBECHECKED}, and \eqref{eq:BA_bd_unif_nostep_revised-TOBECHECKED},
\begin{align*}
\mathrm{RS}_\star(\alpha_n,\kappa)
&\le
\log 2+\frac{\alpha_n}{2\varepsilon_n}B(q_n)
+\alpha_n\left(
-\frac{A_n}{2\varepsilon_n}
+\frac{\Phi(\kappa-\delta)}{2}\log\varepsilon_n
+C(\delta)\right)\\
&=
\frac{\alpha_n\Phi(\kappa-\delta)}{2}\log\varepsilon_n
+\log 2
+\frac{\alpha_n}{2\varepsilon_n}\big(B(q_n)-A_n\big)
+\alpha_n C(\delta)\\
&\le
\frac{\alpha_n\Phi(\kappa-\delta)}{2}\log\varepsilon_n
+\log 2
+\frac{C_0\alpha_c(\kappa)}{2}
+\alpha_c(\kappa)\,C(\delta),
\end{align*}
for all large $n$, since $\alpha_n\le \alpha_c(\kappa)$ and $C(\delta)>0$.

Since $\varepsilon_n\to 0$, we have $\log\varepsilon_n\to -\infty$, and since
$\alpha_n\to\alpha_c(\kappa)>0$ and $\Phi(\kappa-\delta)>0$, the right-hand side tends to $-\infty$.
Therefore $\mathrm{RS}_\star(\alpha_n,\kappa)\to -\infty$, proving the claim.

\end{proof}

\appendix
\section{Appendix}
\subsection{Uniform growth bound for the inverse Mills ratio: Lemma~\ref{lem:Mills_bound-TOBECHECKED}}
Recall $E(u)=\phi(u)/\overline{\Phi}(u)$. We aim to prove that there exists $C>0$ such that for all
$u\in\mathbb R$,
\[
0<E(u)=\frac{\phi(u)}{\overline{\Phi}(u)} \le \max\{u,0\} + C.
\]
For all $u>0$,
\[
u<\frac{\phi(u)}{\overline{\Phi}(u)}\le u+\frac{1}{u}.
\]
\begin{proof}[Proof of Lemma~\ref{lem:Mills_bound-TOBECHECKED}]
For $u\le 1$, since $\overline{\Phi}$ is decreasing, we have $\overline{\Phi}(u)\ge \overline{\Phi}(1)>0$ and
$\phi(u)\le 1$, hence
\[
E(u)=\frac{\phi(u)}{\overline{\Phi}(u)}\le \frac{1}{\overline{\Phi}(1)}=:C_1<\infty.
\]
For $u>0$, integrate by parts to get the classical identity
\[
\overline{\Phi}(u)=\int_u^\infty \phi(t)\,dt
=\Big[-\frac{\phi(t)}{t}\Big]_{t=u}^\infty-\int_u^\infty\frac{\phi(t)}{t^2}\,dt
=\frac{\phi(u)}{u}-\int_u^\infty\frac{\phi(t)}{t^2}\,dt.
\]
Here the boundary term at $\infty$ vanishes since $\phi(t)/t\to 0$ as $t\to\infty$.
Since $t\ge u$ on $[u,\infty)$,
\[
\int_u^\infty \frac{\phi(t)}{t^2}\,dt\le \frac{1}{u^2}\int_u^\infty \phi(t)\,dt
=\frac{\overline{\Phi}(u)}{u^2},
\]
so
\begin{equation}\label{eq: two-sided mills}
 \overline{\Phi}(u)\ge \frac{\phi(u)}{u}-\frac{\overline{\Phi}(u)}{u^2},
\qquad\text{hence}\qquad
\frac{\phi(u)}{\overline{\Phi}(u)}\le u+\frac{1}{u}.
\end{equation}
In particular, for $u\ge 1$ we have $E(u)\le u+1$. Hence, we can choose $C:=\max\{C_1,1\}$.

Finally, for $u>0$,
\[ \overline{\Phi}(u)= \int_u^\infty \phi(t)\,dt=\int_u^\infty \frac{-\phi'(t)}{t}\,dt
=\Big[-\frac{\phi(t)}{t}\Big]_{u}^{\infty}-\int_u^\infty \frac{\phi(t)}{t^{2}}\,dt\\
< \frac{\phi(u)}u.\]

\end{proof}

\subsection{Proof of Lemma~\ref{lem:F negative-TOBECHECKED}}
We aim to prove the following: Let $(x,y)\in\mathbb R^2$ satisfy $x\ge 0$, $0<y<\frac{2}{3}$, and
\begin{equation}\label{eq:consFneg_lemma}
x+2y\ge 1,\qquad
x^2+xy-3x-3y+2\ge 0,\qquad
x+y<1.
\end{equation}
Then it holds that
\begin{equation}\label{eq:FdefFneg_lemma-TOBECHECKED}
F(x,y):=x^2+6xy+6y^2-x-4y<0.
\end{equation}
\begin{proof}
Fix $y\in(0,2/3)$.
Consider the quadratic constraint
\[
q_y(x):=x^2+xy-3x-3y+2\ge 0.
\]
Its roots are
\begin{equation}\label{eq:rootsFneg_lemma}
r_\pm(y)=\frac{3-y\pm\sqrt{y^2+6y+1}}{2}.
\end{equation}
Since
 ${y^2+6y+1}>(y+1)^2$ for $y>0$, we have
\[
r_+(y)=\frac{3-y+\sqrt{y^2+6y+1}}{2}>\frac{3-y+(y+1)}{2}=2>1.
\]
Because $q_y$ has leading coefficient $1$, the inequality $q_y(x)\ge 0$ implies
\[
x\in(-\infty,r_-(y)]\cup[r_+(y),\infty).
\]
From $x+y<1$ in \eqref{eq:consFneg_lemma} we get $x<1-y\le 1<r_+(y)$, so the second branch
is impossible and therefore
\(
x\le r_-(y).
\) 
Moreover, $r_-(y)+y<1$ holds for $y>0$, because it is equivalent to
$\sqrt{y^2+6y+1}>y+1$.

Combining $x\ge 0$ and $x+2y\ge 1$ from \eqref{eq:consFneg_lemma} with $x\le r_-(y)$, the feasible
set of $x$ for this fixed $y$ is the interval
\begin{equation}\label{eq:IyFneg_lemma}
I_y:=\Bigl[\max\{0,1-2y\},\,r_-(y)\Bigr].
\end{equation}
Note that if $I_y$ is empty, then the claim follows immediately for that $y$. Hence, we assume that $I_y$ is not empty.

For fixed $y$, the function $x\mapsto F(x,y)$ is a convex quadratic since
\[
F(x,y)=x^2+(6y-1)x+(6y^2-4y),\qquad \frac{\partial^2}{\partial x^2}F(x,y)=2>0.
\]
Hence the maximum of $F(\cdot,y)$ over the interval $I_y$ is attained at an endpoint since $I_y$ is a closed interval and 
 $F(\cdot,y)$ is convex and continuous.

For the left endpoint, there are two cases.
If $y\le \tfrac12$, then $\max\{0,1-2y\}=1-2y$ and
\begin{equation}\label{eq:leftEndpoint1Fneg_lemma}
F(1-2y,y)=-2y^2<0.
\end{equation}
If $y\ge \tfrac12$, then $\max\{0,1-2y\}=0$ and, using $y<\tfrac23$,
\begin{equation}\label{eq:leftEndpoint2Fneg_lemma}
F(0,y)=6y^2-4y=2y(3y-2)<0.
\end{equation}

For the right endpoint $x=r_-(y)$, use the identity $q_y(x)=0$ to eliminate $x^2$:
\[
F(x,y)=(x^2+xy-3x-3y+2) + (5xy+2x+6y^2-y-2)=5xy+2x+6y^2-y-2.
\]
Substituting $x=r_-(y)$ gives
\begin{equation}\label{eq:rightEndpointFneg_lemma}
F(r_-(y),y)=\frac{7y^2+11y+2-(5y+2)\sqrt{y^2+6y+1}}{2}.
\end{equation}
Both $7y^2+11y+2$ and $(5y+2)\sqrt{y^2+6y+1}$ are strictly positive for $y>0$,
so \eqref{eq:rightEndpointFneg_lemma} is negative if and only if
\[
(5y+2)\sqrt{y^2+6y+1}>7y^2+11y+2.
\]
Squaring and simplifying yields
\begin{equation}\label{eq:squareDiffFneg_lemma}
(5y+2)^2(y^2+6y+1)-(7y^2+11y+2)^2
=8y^3(2-3y),
\end{equation}
which is strictly positive for $y\in(0,2/3)$. Hence $F(r_-(y),y)<0$.

Therefore $F$ is negative at both endpoints of $I_y$, so $\max_{x\in I_y}F(x,y)<0$ and
$F(x,y)<0$ for every feasible $x$ at this $y$. Since $y$ was arbitrary in $(0,2/3)$, the conclusion follows.
\end{proof}
\subsection{A rational function inequality}
\begin{lemma}\label{lem: rational function bound-TOBECHECKED}
For any $r\in(0,1)$, 
\[ \frac{r(4-r)(1-r)^2}{(r^2-6r+6)^2}\le \frac{1}{18}.
\]
\end{lemma}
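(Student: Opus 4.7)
The plan is to clear the strictly positive denominator (note $r^2-6r+6=(r-3)^2-3\ge 1$ on $[0,1]$) and reduce the claim to showing
\[
p(r):=(r^2-6r+6)^2-18\,r(4-r)(1-r)^2\ \ge\ 0 \qquad\text{for all }r\in[0,1].
\]
Direct expansion gives $p(r)=19r^4-120r^3+210r^2-144r+36$, with $p(1)=1$. Since $r=1$ is a root of $p(r)-1$, polynomial division yields the factorization
\[
p(r)-1=(r-1)\,h(r),\qquad h(r):=19r^3-101r^2+109r-35,
\]
so that, because $r-1\le 0$ on $[0,1]$, the inequality $p(r)\ge 1>0$ would follow from $h(r)\le 0$ on $[0,1]$.

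To control $h$, I would use a concavity argument. Since $h''(r)=114r-202<0$ on $[0,1]$ (because $101/57>1$), $h$ is strictly concave on $[0,1]$, and hence $h'(r)=57r^2-202r+109$ is strictly decreasing there. Combined with $h'(0)=109>0$ and $h'(1)=-36<0$, this produces a unique interior critical point $r^\star\in(0,1)$, and the concavity of $h$ gives $\max_{[0,1]}h=h(r^\star)$. It therefore remains to show $h(r^\star)<0$.

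To evaluate $h(r^\star)$ cleanly, I would perform the Euclidean division of $h$ by $h'$ to obtain the identity
\[
h(r)=\Bigl(\tfrac{r}{3}-\tfrac{101}{171}\Bigr)h'(r)+\frac{5024-7976r}{171}.
\]
Using $h'(r^\star)=0$, this collapses to $h(r^\star)=(5024-7976r^\star)/171$, so the claim $h(r^\star)<0$ is equivalent to the explicit lower bound $r^\star>628/997$. I would prove this lower bound without computing $r^\star$: since $h'$ is strictly decreasing on $[0,1]$, it suffices to exhibit a rational $r_0>628/997$ satisfying $h'(r_0)>0$. The choice $r_0=21/32$ works, since $h'(21/32)=1009/1024>0$ by a short rational arithmetic check, and the integer inequality $21\cdot 997=20937>20096=32\cdot 628$ confirms $r_0>628/997$.

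The main obstacle is that the required bound $r^\star>628/997\approx 0.630$ is fairly tight against the actual value $r^\star\approx 0.664$, so cruder witnesses such as $r_0=1/2$ or $5/8$, which lie below $628/997$, yield lower bounds on $r^\star$ that are insufficient even though $h'$ is positive there. One must instead pick a rational $r_0$ strictly between $628/997$ and $r^\star$, and $r_0=21/32$ fits; once this witness is identified, every verification reduces to elementary integer arithmetic.
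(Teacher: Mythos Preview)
Your proof is correct and follows essentially the same route as the paper: reduce to $P(r)\ge 0$, expand to $19r^4-120r^3+210r^2-144r+36$, factor $P(r)-1=(r-1)C(r)$ with the same cubic $C$, observe concavity of $C$ on $[0,1]$, and show the unique interior maximizer $r^\star$ satisfies $C(r^\star)=(5024-7976r^\star)/171<0$. The only cosmetic difference is in the lower bound on $r^\star$: the paper uses the quadratic formula and the estimate $\sqrt{15952}<127$ to get $r^\star>25/38$, whereas you exhibit the witness $r_0=21/32$ with $h'(21/32)=1009/1024>0$ and check $21/32>628/997$; both bounds exceed $628/997$ and the two arguments are interchangeable.
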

\begin{proof}
This is equivalent to
\[
(r^2-6r+6)^2 - 18\,r(4-r)(1-r)^2 \ge 0.
\]
Expanding gives the quartic polynomial
\[
P(r)\coloneqq 19r^4-120r^3+210r^2-144r+36.
\]
A useful factorization is
\[
P(r)-1=(r-1)\,C(r),
\qquad
C(r)\coloneqq 19r^3-101r^2+109r-35.
\]
On $[0,1]$, the cubic $C$ is strictly concave because
\[
C''(r)=114r-202<0\qquad\text{for all }r\in[0,1].
\]
The derivative $C'(r)=57r^2-202r+109$ is strictly decreasing, and $C'(0)=109>0$ while $C'(1)=-36<0$,
so there is a unique $r_0\in(0,1)$ with $C'(r_0)=0$, and $C$ attains its maximum at $r_0$.
Using $C'(r_0)=0$ (i.e.\ $57r_0^2=202r_0-109$) gives
\[
C(r_0)=\frac{5024-7976r_0}{171}.
\]
The quadratic formula yields
\[
r_0=\frac{202-\sqrt{202^2-4\cdot 57\cdot 109}}{2\cdot 57}
\]
and $202^2-4\cdot 57\cdot 109=15952<127^2$, hence $r_0>(202-127)/114=25/38$.
Therefore $7976r_0>7976\cdot(25/38)=99700/19>5024$, so $C(r_0)<0$.
Thus $C(r)<0$ for all $r\in[0,1]$. Since $r-1<0$ for $r\in(0,1)$, we obtain $(r-1)C(r)>0$ on $(0,1)$,
and thus $P(r)>1$ on $(0,1)$. In particular, $P(r)>0$ on $(0,1)$, proving the desired inequality.
\end{proof}

\end{document}